\newtheorem{theorem}{Theorem}[section]
\newtheorem{lemma}[theorem]{Lemma}
\newtheorem{proposition}[theorem]{Proposition}
\newtheorem{conjecture}[theorem]{Conjecture}
\newtheorem{corollary}[theorem]{Corollary}
\theoremstyle{definition}
\theoremstyle{remark}
\newtheorem{remark}[theorem]{Remark}
\numberwithin{equation}{section}
\newcommand\Z{\ensuremath{\mathbb Z}}
\newcommand\Q{\ensuremath{\mathbb Q}}
\newcommand\R{\ensuremath{\mathbb R}}
\newcommand\C{\ensuremath{\mathbb C}}
\newcommand\F{\ensuremath{\mathbb F}}
\def\cO{{\ensuremath{\mathcal O}}}
\newcommand\Div{\operatorname{Div}}
\newcommand\End{\operatorname{End}}
\newcommand\Ind{\operatorname{Ind}}
\newcommand\M{\operatorname{M}}
\newcommand\ord{\operatorname{ord}}
\newcommand\PGL{\operatorname{PGL}}
\newcommand\SL{\operatorname{SL}}
\newcommand{\cH}{\mathcal{H}}
\newcommand{\cT}{\mathcal{T}}
\newcommand{\fp}{{\mathfrak{p}}}
\newcommand{\fm}{{\mathfrak{m}}}
\newcommand{\fn}{{\mathfrak{n}}}
\newcommand{\fq}{{\mathfrak{q}}}
\newcommand{\cU}{{\mathcal{U}}}
\newcommand{\cV}{{\mathcal{V}}}
\newcommand{\cE}{{\mathcal{E}}}
\newcommand{\cF}{{\mathcal{F}}}
\newcommand{\fl}{{\mathfrak{l}}}
\newcommand{\tto}[1]{%
\ifthenelse{\equal{#1}{}}{\to}{\stackrel{#1}{\to}}}
\newcommand{\cB}{{\mathcal{B}}}
\newcommand{\smtx}[4]{\left(\begin{smallmatrix}#1&#2\\#3&#4\end{smallmatrix}\right)}
\def\M{\operatorname{M}}
\def\P{\mathbb P}
\def\fN{\mathfrak N}
\def\ord{\operatorname{ord}}
\newcommand{\ra}{\rightarrow}
\newcommand{\lra}{\longrightarrow}
\newcommand{\QQ}{\Q}
\def\fD{\mathfrak{D}}
\def\Xint#1{\mathchoice
{\XXint\displaystyle\textstyle{#1}}%
{\XXint\textstyle\scriptstyle{#1}}%
{\XXint\scriptstyle\scriptscriptstyle{#1}}%
{\XXint\scriptscriptstyle\scriptscriptstyle{#1}}%
\!\int}
\def\XXint#1#2#3{{\setbox0=\hbox{$#1{#2#3}{\int}$}
\vcenter{\hbox{$#2#3$}}\kern-.5\wd0}}
\begin{document}

\title[Uniformization of modular elliptic curves via $p$-adic periods]{Uniformization of modular elliptic curves\\via $p$-adic periods}



\author{Xavier Guitart}
\address{Departament d'Algebra i Geometria\\
Universitat de Barcelona \\ 
Spain}
\email{xevi.guitart@gmail.com}
\urladdr{http://atlas.mat.ub.edu/personals/guitart/}
\thanks{The first named author was partially supported by SFB/TR 45. The second named author was supported by EPSRC CAF EP/J002658/1.}

\author{Marc Masdeu}
\address{Mathematics Institute \\
University of Warwick \\
United Kingdom}
\email{M.Masdeu@warwick.ac.uk}
\urladdr{http://warwick.ac.uk/mmasdeu/}

\author{Mehmet Haluk \c{S}eng\"un}

\address{School of Mathematics and Statistics\\ 
University of Sheffield \\
 United Kingdom}
\email{M.Sengun@sheffield.ac.uk}
\urladdr{http://haluksengun.staff.sheffield.ac.uk}

\subjclass[2010]{11G40 (11F41, 11Y99)}

\date{\today}

\dedicatory{}
\maketitle 

\begin{abstract}The Langlands Programme predicts that a weight $2$ newform $f$ over a number field $K$ with integer Hecke eigenvalues generally should have an {\em associated} elliptic curve $E_f$ over $K$. 
In our previous paper, we associated, building on works of Darmon \cite{darmon-integration} and Greenberg \cite{Gr}, a $p$-adic lattice $\Lambda$ to $f$, under certain hypothesis, and implicitly conjectured that $\Lambda$ is commensurable with the 
$p$-adic Tate lattice of $E_f$. In this paper, we present this conjecture in detail and discuss how it can be used to compute, directly from $f$, an explicit Weierstrass equation for the conjectural $E_f$.
 We develop algorithms to this end and implement them in order to carry out extensive systematic computations in which we compute Weierstrass equations of hundreds of elliptic curves, some with huge heights, over dozens of number 
fields. The data we obtain provide overwhelming amount of support for the conjecture and furthermore demonstrate that the conjecture provides an efficient tool to building databases of elliptic curves over number fields. 
\end{abstract}

\section{Introduction}
Perhaps one of the biggest achievements of number theory in the $20^{\rm th}$-century is the establishment of the correspondence between isogeny classes of elliptic curves over $\Q$ of a fixed conductor $N$ and weight $2$ newforms over $\Q$ of level $\Gamma_0(N)$ which have integer eigenvalues. This correspondence is believed to admit a suitable extension to general number fields and establishing this extension is one of the major goals of the Langlands Programme. In this paper, we are interested in one direction of this conjectural extension (see \cite{taylor-94} for 
the statement of the full conjectural correspondence).
\begin{conjecture} \label{conj: main} Let $K$ be a number field with ring of integers $\mathcal{O}_K$. Let $\fN$ be an ideal of $\mathcal{O}_K$.
For every weight $2$ newform $f$ over $K$ of level $\Gamma_0(\fN)$ with integer Hecke eigenvalues, there is either an 
elliptic curve $E_f/K$ of conductor $\fN$ such that 
$$ L(f,s)=L(E_f,s)$$ 
or a fake elliptic curve $A_f/K$ of conductor $\fN^2$ such that 
$$ L(f,s)^2=L(A_f,s).$$
\end{conjecture}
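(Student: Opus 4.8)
Since this statement is one of the central open predictions of the Langlands programme, a complete proof is out of reach; the realistic goal --- and the one this paper pursues --- is to isolate the genuinely new content, recall what is already a theorem, and then give a \emph{constructive} strategy that produces $E_f$ (or $A_f$) explicitly from $f$ and thereby yields overwhelming numerical evidence.

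First I would dispose of the known cases. Over $K=\Q$ the conjecture is the combination of the Eichler--Shimura construction (which attaches to $f$ an elliptic quotient of $J_0(\fN)$) with the modularity theorem, and the conductors match by Carayol's work on local Langlands compatibility. When $K$ is imaginary quadratic, or more generally when $f$ is cohomological and the relevant locally symmetric space carries enough structure, results of Taylor, Harris--Soudry--Taylor and their successors attach a Galois representation to $f$; the dichotomy in the statement is then forced by a Jacquet--Langlands--type alternative --- either the associated automorphic representation is ``of $\GL_2$ type'' and gives an honest $E_f$, or it descends from a form on a quaternion division algebra and the corresponding motive has quaternionic multiplication, giving a fake elliptic curve $A_f$ whose conductor is $\fN^2$.

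The new input, building on Darmon's theory of $p$-adic periods and Greenberg's generalization, is to make the passage $f \rightsquigarrow E_f$ \emph{effective}. Under the running hypotheses one attaches to $f$ a $p$-adic lattice $\Lambda \subset K_\fp^\times$ --- constructed by $p$-adic integration of a cohomology class built from the Hecke eigensystem of $f$ --- and the core conjecture of our previous paper is that $\Lambda$ is commensurable with the $p$-adic Tate lattice of $E_f$. The plan is then: (i) compute $\Lambda$ to high $p$-adic precision using the action of Hecke operators on the relevant overconvergent (co)homology; (ii) recover the $j$-invariant and a Weierstrass model of the Tate curve $K_\fp^\times/\Lambda$; (iii) recognize the resulting $p$-adic numbers as algebraic numbers in $K$ via lattice-reduction / continued-fraction algorithms, obtaining a candidate equation over $K$; (iv) verify a posteriori that the candidate has conductor $\fN$ and that its local $L$-factors agree with those of $f$ (equivalently $a_\fq(E_f)$ equals the Hecke eigenvalue at $\fq$) for as many primes $\fq$ as is computationally feasible. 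Steps (i)--(iv), run over hundreds of forms, do not prove the conjecture but constitute strong evidence, and in the cases where $E_f$ is already in the literature they directly confirm the commensurability of $\Lambda$ with the Tate lattice.

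The obstacle is twofold. Conceptually, proving that the $p$-adic lattice $\Lambda$ we build is actually commensurable with the Tate lattice of an elliptic curve over $K$ --- i.e.\ that these a priori transcendental periods are algebraic and cut out exactly $E_f$ --- is essentially equivalent to the conjecture itself, and seems to require a reciprocity law identifying these automorphic $p$-adic periods with Galois-cohomological ones, far beyond the Greenberg--Stevens and Mazur--Tate--Teitelbaum results available over $\Q$. Computationally, the bottleneck is precision: $\Lambda$ must be known to enough $p$-adic digits that the rational reconstruction of the (sometimes enormous) Weierstrass coefficients succeeds, which forces both efficient algorithms for the overconvergent cohomology and careful bookkeeping of the $p$-adic valuations involved.
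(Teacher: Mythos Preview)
The statement is a conjecture, and the paper does not claim to prove it; your proposal rightly recognizes this and instead sketches the paper's evidence-gathering strategy. On that level your outline is essentially correct and matches the paper: one builds a $\fp$-adic lattice $\Lambda_f$ from $f$ via multiplicative integration on the cohomology of an $S$-arithmetic group, conjectures it is commensurable with the Tate lattice of $E_f$, and then runs exactly the four-step pipeline you describe (compute $\Lambda_f$ by overconvergent methods, extract the $j$-invariant, recognize algebraic coefficients, verify conductor and $a_\fl$'s) over hundreds of examples.

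Two points of your summary diverge from the paper. First, you overstate what is known. The paper is explicit that the only proven cases of the conjecture are over totally real $K$ when Jacquet--Langlands transfers $f$ to a Shimura curve (citing Blasius); beyond totally real fields the conjecture is \emph{completely open}. The existence of Galois representations attached to $f$ over imaginary quadratic or CM fields (Taylor, Harris--Lan--Taylor--Thorne, Scholze) does not by itself produce the abelian variety $E_f$ or $A_f$; it only allows one, via Faltings--Serre, to verify that a \emph{given} candidate curve is modular by $f$. Second, your account of the elliptic/fake-elliptic dichotomy is not the paper's. The alternative is not governed by whether the automorphic representation transfers to a quaternion division algebra; rather, $A_f$ is a QM abelian surface whose $\overline{K}$-endomorphisms are all defined over $K$, and this can only occur when $K$ is totally imaginary. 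The paper moreover proves (Proposition~\ref{prop: about fake elliptic curves}) that the running hypothesis $\fp \,\|\, \fN$ forces $v_\fp$ of the conductor to be $1$, which is incompatible with a fake elliptic curve (whose conductor would have $v_\fp \geq 4$), so the dichotomy collapses in the setting actually treated --- a structural point absent from your sketch.
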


Recall that an abelian surface $A$ over $K$ is called a {\em fake elliptic curve} if 
$\End_K(A)\otimes \Q$ is isomorphic to a rational quaternion division algebra. Equivalently, it is a QM-abelian surface over $K$ whose $\overline{K}$-endomorphisms are all defined over $K$. The name, coined by Serre, comes from the fact that 
at any prime $\mathfrak{p}$ of good reduction, the reduction $A_\mathfrak{p}$ of $A$ is the square of an elliptic curve over the residue field 
of $\fp$. It is well known that if $A/K$ is a fake elliptic curve then $K$ is necessarily totally imaginary. 

For $K$ totally real, Conjecture \ref{conj: main} is known to hold when the Hecke eigenvalue system afforded by $f$ can be captured by a weight $2$ newform $f'$ on a Shimura curve via the Jacquet-Langlands correspondence (see \cite{Blasius}). Beyond totally real fields,  Conjecture \ref{conj: main} is completely open. However there are significant amounts of numerical data (see \cite{grunewald-helling-mennicke, cremona, whitley, bygott, lingham, scheutzow, gunnells_5, gunnells_23, jones, ellipticcurvesearch}) collected over the years that support its validity. In fact, when $K$ is a CM-field, one can {\em prove} whether a given elliptic curve is modular by a given newform. This employs the existence of 2-dimensional $p$-adic Galois representations associated to modular forms (see the recent breakthroughs \cite{scholze, taylor_etal}) and the Faltings--Serre method (see \cite{livne, dieulefait}). 

It is important to note that Conjecture \ref{conj: main} is not a constructive statement, it simply claims existence. And as such, it is not perfectly satisfying. It is desirable to have a {\em description} of 
$E_f$ whose existence is claimed by Conjecture \ref{conj: main}. In this regard, the situation is best over $K=\Q$. One can construct $E_f$ analytically as a torus $\C / \Lambda$ using the periods of $f$. Modular symbols give a method that allows for fast computations of these periods. The celebrated database of elliptic curves over $\Q$ of Cremona (\cite{lmfdb}) is based on an efficient implementation of the above strategy. Over totally real fields $K$, in cases where we can transfer the situation to a Shimura curve as mentioned above, one can describe the period lattice $\Lambda$ of $E_f$ using the periods of $f'$ this time. However, one runs into computational difficulties as one does not have the modular symbols method anymore due to the lack of cusps. Recently \cite{VW} and \cite{Nelson} have made progress in devising efficient algorithms to compute these periods.

Beyond the above situations, no description of $E_f$ is known. To date, the only case for which there has been a {\em conjectural} description is over totally real fields. In this case, there is a conjecture of Oda \cite{oda} which describes the period lattice of $E_f$ using the periods of the Hilbert modular form $f$. In \cite{Dembele, bober}, this conjecture was successfully utilized to compute the equation of $E_f$ in the case of $K=\Q(\sqrt{5})$. However, over general number fields $K$, the approach of trying to construct the period lattice $\Lambda$ of $E_f$ directly from the periods of $f$, as in the conjecture of Oda, runs into difficulty as the periods of $f$ do not suffice in general. For example, it is well known (see \cite{kurcanov, egm}) that the periods of a weight 2 Bianchi newform $f$ span a one-dimensional lattice in $\R$. Due to the lack of a (conjectural) description, all the numerical works that investigate the validity of Conjecture \ref{conj: main} that we alluded to above compile lists of elliptic curves over $K$ essentially by searching through the Weierstrass coefficients in a box. 

Our previous paper \cite{GMS} contains two constructions that lead to two conjectures; one describing the complex period lattice 
of $E_f$ over number fields with at least one real place, a second one describing the homothety class of the $p$-adic Tate lattice of $E_f$ when it admits one. In fact, the focus of~\cite{GMS} was in constructing certain local points on $E_f$ which were conjectured to be global (the so-called {\em Darmon points}), generalizing the seminal work of Darmon \cite{darmon-integration} to number fields of arbitrary signature in a cohomological manner that was pioneered by Greenberg \cite{Gr}. The conjectural description of the lattice (complex or $p$-adic) of $E_f$ was treated rather tangentially, its main role being that of providing well-definedness of the Darmon points. In particular, the experimental evidence supporting the lattice conjecture given in \cite{GMS} is essentially of an indirect nature, manly coming from the fact that the numerically computed Darmon points appeared to lie on a curve having the predicted lattice.

The focus of this paper is on the conjectural description of the $p$-adic lattice of $E_f$, in those cases where it admits a Tate uniformization. What we do is to present an explicit description of the construction which is better suited for numerical calculations. For number fields that have at most one complex place, we also provide efficient algorithms for computing the lattices in practice. In addition, we show how to use the information on the homothety class of the lattice to get $p$-adic approximations to the algebraic invariants of $E_f$, allowing for the recovery of the Weierstrass equation of $E_f$.

We have successfully implemented our approach and computed extensive tables that cover hundreds of elliptic curves over dozens of numbers fields. In particular, we have produced elliptic curves over number fields whose Weierstrass coefficients have huge heights. In certain situations this method can be more feasible than the ``searching methods'' that have been employed by many authors, as in~\cite{ellipticcurvesearch,bober}.

On the one hand, our data provide an overwhelming amount of data supporting the validity of the conjecture. On the other hand, they show the potential of the method for actually computing equations of elliptic curves attached to modular forms over number fields, which can be helpful in extending Cremona's tables to number fields other than $\Q$. Actually, the only reason why we focus on the $p$-adic conjecture is that the archimedean conjecture suffers from being ineffective from a computational point of view.

Let us give a sketch of our method and the contents of the paper. Let $K$ be a number field, which we assume of narrow class number $1$. Let $f$ be a weight 2 newform of over $K$ of level $\Gamma_0(\fN)$ with integer Hecke eigenvalues. Assume that there is a prime ideal $\fp$ such that $\fp || \fN$. In Section \ref{subsec: mod curves cohomology}, we start with transferring the problem into the realm of the cohomology of arithmetic groups. 
As it has both theoretical and computational advantages, we consider not only $\PGL_2$ but also its inner forms. We present a cohomological version of Conjecture \ref{conj: main} and then 
show that our assumption above on the level ideal $\fN$ of $f$ naturally takes fake elliptic curves out of the game.

In Section \ref{subsection: a conjectural construction of the lattice}, we expose the construction of the $\fp$-adic lattice. We first move things from arithmetic group setting to $S$-arithmetic group setting and single out a Hecke eigenclass $\Psi_f$ in the cohomology, with coefficients in the space of rigid analytic differential 1-forms on the $\fp$-adic upper half-plane $\cH_\fp$, of a certain $S$-arithmetic group that ``captures the arithmetic of $f$". Then we consider the $\fp$-adic lattice $\Lambda$ obtained by pairing, under a certain multiplicative integration pairing, $\Psi_f$ with certain homology classes with coefficients in degree zero divisors on $\cH_\fp$. The main conjecture of this paper then claims that $\Lambda$ is homothetic to the Tate lattice of an elliptic curve $E_f$ over $K$ which is modular by $f$. 

In Section \ref{sec: explicit methods and algorithms} we describe the methods for explicitly computing the $\fp$-adic lattice, in the case where $K$ has at most one complex place. What remains to be done is  extraction of the algebraic invariants of $E_f$ from $\Lambda$, which we discuss in Section \ref{sec: numerical computations and tables}. Let us say a few words about the implementation and the data collected. A more detailed discussion can be found in Sections \ref{sec: explicit methods and algorithms} and \ref{sec: numerical computations and tables}. All the data regarding the geometry of arithmetic groups, which were used as the input for our (co)homology programs, were obtained using programs of John Voight \cite{voight} (for arithmetic Fuchsian groups) and Aurel Page \cite{page} (for arithmetic Kleinian groups). In particular, we were only able to compute with number fields $K$ which were either totally real or almost totally real (that is, with a unique complex place). The $\fp$-adic integration pairing was computed using the tools developed by the first two authors in~\cite{gm}. These employ the method of {\em overconvergent cohomology} without which the necessary computations would not be feasible.

The homothety class of a lattice $\Lambda=q^\Z \subset K_\fp^\times$ is determined by its $\mathcal{L}$-invariant
$$ \mathcal{L}(\Lambda):=\dfrac{\log_\fp q }{\ord_\fp q}.$$
If an elliptic curve $E$ over $K$ admits a $\fp$-adic Tate uniformization $K_\fp^\times / q_E^\Z$, then its $\fp$-adic $\mathcal{L}$-invariant is defined as $\mathcal{L}(q_E^\Z)$. 
So the main conjecture of our paper describes the conjectural $E_f$ of Conjecture \ref{conj: main} through its $\fp$-adic $\mathcal{L}$-invariant. 

\section*{Acknowledgements}
The authors would like to thank Victor Rotger for suggesting the idea that lead to this project to the third named author in 2011. The authors would also like to thank John Cremona for his constant encouragement during this work. We would also like to express our gratitude to Aurel Page, who has helped us to tune his code, a crucial input to our algorithms. Finally we thank John Voight for his detailed comments on an earlier version of this paper which we found very stimulating.

\section{Modular elliptic curves and $\fp$-adic lattices}\label{sec: modular curves and lattices}
We begin this section by restating Conjecture \ref{conj: main} in terms of certain classes in the cohomology of quaternion orders. Then we construct $p$-adic lattices from these cohomology classes which, conjecturally, correspond to the associated modular elliptic curves.

\subsection{Modular elliptic curves via cohomology}\label{subsec: mod curves cohomology} The followings are well known, we invite the reader to consult \cite{harder-87}, \cite[\S3]{hida94} or \cite[\S2]{GMS} for definitions or details that are missing. Let $K$ be a number field which we assume to be of narrow class number $1$. Denote by $(r,s)$ its signature; i.e., $K$ has $r$ real places and $s$ complex places. Let $B$ be a quaternion algebra over $K$ of discriminant $\fD$ and which splits at $n$ of the real places. Thus there is an embedding 
\begin{align}\label{eq: embedding of B}B^\times/K^\times\hookrightarrow \PGL_2(\R)^n\times\PGL_2(\C)^s,\end{align}
given by the choice of splitting isomorphisms at the archimedean places.

If $\fm$ is an integral ideal of $K$ which is coprime to $\fD$, we denote by $R_0^\fD(\fm)$ an Eichler order of level $\fm$. We set $\tilde\Gamma_0^\fD(\fm)=R_0^\fD(\fm)^\times/\cO_K^\times$ and assume that it is torsion-free. The group $\PGL_2(\R)$ acts on the upper half-pane $\cH=\R \times\R_{>0}$ by fractional linear transformations (for negative determinant matrices, we first apply complex conjugation). Similarly, $\PGL_2(\C)$ acts on the hyperbolic $3$-space $\mathbb{H} =\C\times\R_{>0}$. Therefore, $\tilde\Gamma_0^\fD(\fm)$ acts via \eqref{eq: embedding of B} on $\cH^n\times \mathbb{H}^s$ and the quotient
\begin{align}
  Y_0^\fD(\fm) = \tilde\Gamma_0^\fD(\fm)\backslash \cH^n\times \mathbb{H}^s
\end{align}
is a Riemannian manifold of real dimension $2n+3s$, which is non-compact if and only if the ambient quaternion algebra $B$ is the $2\times 2$ matrix algebra over $K$.

For any abelian group $A$ with a $\tilde\Gamma_0^\fD(\fm)$-action, the Betti cohomology groups $H^{n+s}( Y_0^\fD(\fm),A)$ are finitely generated abelian groups. As 
$\cH^n\times \mathbb{H}^s$ is contractible, we have
\begin{align*}
 H^{i}( Y_0^\fD(\fm),A) \simeq   H^{i}(\tilde\Gamma_0^\fD(\fm),A),
\end{align*}
where the cohomology on the right is group cohomology. We shall often interchange the two sides without alerting the reader. 

These cohomology groups are equipped with the action of the Hecke operators, a collection of endomorphism $\{T_\fl\}$ indexed by the primes $\fl\nmid \fD$. A cohomology class $f\in H^{n+s}( Y_0^\fD(\fm),\C)$ is said to be a \emph{Hecke eigenclass} if it is an eigenvector for all the Hecke operators and is said to be \emph{rational} if all its eigenvalues are integers. That is,
\begin{align}\label{eq: eigenclass}
  T_\fl f = a_\fl(f) f \text{ with $a_\fl(f)\in \Z$, for all $\fl\nmid \fD$.}
\end{align}
We say that such an $f$ is \emph{Eisenstein} if $a_\fl(f)= |\fl| + 1$ for all $\fl\nmid \fD$, where $|\fl|$ stands for the norm of the ideal $\fl$. Two Hecke eigenclasses $f$ and $f'$, possibly of different levels $\fm$ and $\fm'$, are said to be \emph{equivalent} if $a_\fl(f) = a_\fl(f')$ for all $\fl\nmid \fm\fm'$. We say that $f$ is \emph{new} if it is {\em not} Eisenstein and {\em not} equivalent to any Hecke eigenclass of level a strict divisor of $\fm$. 

The generalized Eichler-Shimura Isomorphism, as established by Harder, and the Jacquet-Langlands Correspondence tell us, roughly speaking, that the cohomology groups 
$H^{n+s}( Y_0^\fD(\fm),\C)$ correspond to weight $2$ modular forms of level $\Gamma_0(\fm)$. 
The following conjecture is a cohomological version of Conjecture \ref{conj: main}, in which we consider not only arithmetic groups for $\PGL_2$ over $K$ but also its inner forms. 

\begin{conjecture}\label{conj: cohomological C1}
  Let $f\in H^{n+s}( Y_0^\fD(\fm),\C)$ be a new rational Hecke eigenclass. If $K$ has some real place, then there exists an elliptic curve $E_f/K$, of conductor $\fD\fm$, such that
  \begin{align}\label{eq: property defining E_f}
    \# E_f(\cO_K/\fl) = 1 + |\fl| - a_\fl(f) \text{ for all } \fl\nmid \fD.
  \end{align}
If $K$ is totally complex, then there exists either and elliptic curve $E_f$ of conductor $\fD\fm$ satisfying \eqref{eq: property defining E_f} or a fake elliptic curve $A_f/K$, of conductor $(\fD\fm)^2$, such that
  \begin{align}\label{eq: property defining A_f}
    \# A_f(\cO_K/\fl) = (1 + |\fl| - a_\fl(f))^2 \text{ for all } \fl\nmid \fD.
  \end{align}
\end{conjecture}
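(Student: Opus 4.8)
The plan is to deduce Conjecture~\ref{conj: cohomological C1} from Conjecture~\ref{conj: main}: I would pass from the quaternionic eigenclass $f$ to a classical weight~$2$ newform over $K$ by combining the Jacquet--Langlands correspondence with Harder's generalized Eichler--Shimura isomorphism, apply Conjecture~\ref{conj: main} to that newform, and then translate the resulting identity of $L$-functions into the congruences \eqref{eq: property defining E_f} and \eqref{eq: property defining A_f}. A full equivalence with Conjecture~\ref{conj: main} additionally requires the converse passage, which I comment on at the end; so strictly speaking this is a conditional derivation.

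\emph{Descent to a classical newform.} Starting from a new rational Hecke eigenclass $f\in H^{n+s}(Y_0^\fD(\fm),\C)$, I would apply Jacquet--Langlands to the automorphic representation of $B^\times$ generated by $f$ to obtain a Hecke eigenclass $f^\sharp$ for $\PGL_2/K$ (the split case, with $n=r$) lying in $H^{r+s}$ of level $\Gamma_0(\fD\fm)$, with $a_\fl(f^\sharp)=a_\fl(f)$ for all $\fl\nmid\fD$. The level is exactly $\fD\fm$: at each $\fp\mid\fD$ the local component of $f$ is one-dimensional for $B_\fp^\times$ and transfers to an unramified twist of the Steinberg representation of $\GL_2(K_\fp)$, of conductor $\fp$. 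One checks along the way that $f^\sharp$ is still new, non-Eisenstein and rational. Harder's generalized Eichler--Shimura isomorphism \cite{harder-87} then identifies the Hecke eigensystem of $f^\sharp$ with that of a weight~$2$ newform $F$ over $K$ of level $\Gamma_0(\fD\fm)$ with $a_\fl(F)=a_\fl(f)\in\Z$ for every $\fl\nmid\fD$.

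\emph{Applying Conjecture~\ref{conj: main} and unwinding the $L$-functions.} Conjecture~\ref{conj: main}, applied to $F$, produces either an elliptic curve $E_f/K$ of conductor $\fD\fm$ with $L(F,s)=L(E_f,s)$, or a fake elliptic curve $A_f/K$ of conductor $(\fD\fm)^2$ with $L(F,s)^2=L(A_f,s)$. Comparing Euler factors at a prime $\fl\nmid\fD$ in the first case gives $a_\fl(E_f)=a_\fl(F)=a_\fl(f)$, and $a_\fl(E_f)=1+|\fl|-\#E_f(\cO_K/\fl)$ turns this into \eqref{eq: property defining E_f}; the same comparison with the degree-four $L$-function of $A_f$, whose reduction at $\fl$ is the square of an elliptic curve over the residue field, gives \eqref{eq: property defining A_f}. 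Since a fake elliptic curve can exist only over a totally imaginary field, the second alternative is impossible when $K$ has a real place, leaving only $E_f$ of conductor $\fD\fm$ as in \eqref{eq: property defining E_f}; when $K$ is totally complex both alternatives persist, which is precisely the dichotomy in the statement.

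\emph{Expected obstacle.} The substance of the argument lies entirely in carrying out the descent and Eichler--Shimura steps with the needed precision, rather than in any genuinely new idea. The hard part will be to pin down, under Jacquet--Langlands, the exact level and the Eisenstein/cuspidal and new/old dichotomies---including the degenerate cases $n=0$ (with $B$ ramified at every real place) and $B=\M_2(K)$ itself, and the bookkeeping of the archimedean components implicit in \eqref{eq: embedding of B}---and to quote the form of Harder's theorem that places weight~$2$ automorphic forms in degree $n+s$ and matches rational classes with integral eigensystems. For the reverse implication one would run Jacquet--Langlands in the opposite direction to realize an arbitrary classical newform in the cohomology of a suitable quaternion order, invoking over totally real fields the realization on Shimura curves recalled in the introduction (cf.\ \cite{Blasius}); outside the cases where Conjecture~\ref{conj: main} itself is known, this is where the whole argument remains conjectural.
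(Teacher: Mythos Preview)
The statement you are addressing is a \emph{conjecture}, and the paper does not offer a proof of it. What the paper does is exactly what you do: it motivates Conjecture~\ref{conj: cohomological C1} as ``a cohomological version of Conjecture~\ref{conj: main}'' by appealing to Harder's generalized Eichler--Shimura isomorphism and the Jacquet--Langlands correspondence, which identify new Hecke eigensystems in $H^{n+s}(Y_0^\fD(\fm),\C)$ with those of weight~$2$ newforms of level $\Gamma_0(\fD\fm)$ on $\PGL_2/K$. Your write-up fleshes out this heuristic more carefully than the paper does---tracking the level at primes dividing $\fD$, the new/Eisenstein conditions, and the translation from $L$-function identities to point counts---but the underlying logic is the same, and you correctly flag that it is a conditional derivation rather than a proof. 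There is nothing further to compare against.
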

\begin{remark}
Observe that condition \eqref{eq: property defining E_f} does not uniquely characterize $E_f$, but only its $K$-isogeny class. We will abuse the terminology and refer to any curve satisfying \eqref{eq: property defining E_f} as the curve $E_f$ associated to $f$. A similar remark holds for $A_f$.
\end{remark}

\begin{remark}\label{rk: restate as Gamma_0}
It is sometimes convenient to work with the group $\Gamma_0^\fD(\fm)=R_0^\fD(\fm)^\times_1/\{\pm 1\}$ (here $R_0^\fD(\fm)^\times_1$ denotes the group of elements of reduced norm $1$ in $R_0^\fD(\fm)^\times$). In this case, one needs to take into account the involutions coming from units of $K$. More precisely, denote by $U_+'$ the units in $\cO_K^\times$ which are positive at the real places that ramify in $B$. Then any any representative $u\in U'_+/(\cO_K^\times)^2$ gives rise to an involution $T_u$ on $H^{n+s}( \Gamma_0^\fD(\fm),\C)$ 
(see \cite[\S2]{GMS}) and it follows that $H^{n+s}( \tilde{\Gamma}_0(\fm),\C)$ can be identified with the subspace of $H^{n+s}(\Gamma_0^\fD(\fm),\C)$ that is fixed under 
these involutions. Hecke operators $\{T_\fl\}_{\fl\nmid \fD}$ on $H^{n+s}( \Gamma_0^\fD(\fm),\C)$ are defined in the usual way and a Hecke eigenclass 
in $H^{n+s}(\tilde\Gamma_0^\fD(\fm),\C)$ can be thought of as a Hecke eigenclass $H^{n+s}(\Gamma_0^\fD(\fm),\C)$ that is fixed by the involutions 
$T_u$ with $u\in U_+'/(\cO_K^\times)^2$. 
\end{remark}


From Section \ref{subsection: a conjectural construction of the lattice} on we will consider levels that have valuation $1$ at some prime. The following proposition  and corollary rule out the possibility of having a fake elliptic curve in that setting.
\begin{proposition}\label{prop: about fake elliptic curves}
 Let $A$ be a fake elliptic curve over $K$ of conductor $\mathfrak{L}$, and let $\fp$ be a prime dividing $ \mathfrak{L}$. Then $v_\fp(\mathfrak{L})\geq 4$.
\end{proposition}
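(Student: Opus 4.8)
The plan is to analyze a fake elliptic curve $A/K$ locally at a prime $\fp$ dividing its conductor $\mathfrak{L}$, using the fact that $A$ becomes, after reduction or after base change, intimately tied to an elliptic curve, together with the constraint that $\End_K(A)\otimes\Q$ is a quaternion division algebra $\mathcal{B}/\Q$. First I would recall that the conductor exponent of an abelian variety at $\fp$ is $v_\fp(\mathfrak{L}) = 2\dim A - \dim V_\ell^{I_\fp} + \delta_\fp$ where $V_\ell = T_\ell A \otimes \Q_\ell$ is the $\ell$-adic Tate module (for $\ell \neq \operatorname{char} k_\fp$), $I_\fp$ is the inertia group, and $\delta_\fp \geq 0$ is the wild (Swan) part; since $\dim A = 2$, this reads $v_\fp(\mathfrak{L}) = 4 - \dim V_\ell^{I_\fp} + \delta_\fp$. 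So the proposition is equivalent to showing $V_\ell^{I_\fp} = 0$, i.e.\ that $A$ has no nonzero inertia invariants at $\fp$ — equivalently, that $A$ has purely ``bad'' reduction at $\fp$ with no toric or good part surviving.

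The key point is that the quaternionic multiplication forces the local Galois representation to be ``homogeneous.'' Concretely, $V_\ell A$ is a free module of rank $1$ over $\mathcal{B}_\ell := \mathcal{B}\otimes_\Q \Q_\ell$, and the image of $G_{K_\fp}$ (in particular of $I_\fp$) commutes with the $\mathcal{B}_\ell$-action, hence lands in $\mathcal{B}_\ell^\times$ acting by right multiplication. Now $V_\ell^{I_\fp}$ is a $\mathcal{B}_\ell$-submodule of $V_\ell$; since $V_\ell$ has $\mathcal{B}_\ell$-rank $1$ and (for $\ell$ split in $\mathcal{B}$, which one may choose) $\mathcal{B}_\ell \cong M_2(\Q_\ell)$ so that $V_\ell \cong W \oplus W$ with $W$ two-dimensional and $I_\fp$ acting diagonally on the two copies, we get $\dim_{\Q_\ell} V_\ell^{I_\fp} = 2\dim_{\Q_\ell} W^{I_\fp} \in \{0, 2, 4\}$. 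The case $\dim V_\ell^{I_\fp} = 4$ means good reduction, contradicting $\fp \mid \mathfrak{L}$. The case $\dim V_\ell^{I_\fp} = 2$ is the one to rule out: here $A$ would have, up to isogeny, ``semistable of toric rank $2$'' reduction, i.e.\ $W^{I_\fp}$ one-dimensional, so $A$ acquires over $K_\fp$ a partial Tate (multiplicative) part. I would rule this out by invoking that a fake elliptic curve has everywhere potentially good reduction: at a prime $\fp$ of bad reduction the connected component of the Néron model is an extension of an abelian variety by a torus, and the quaternionic action descends to the torus part, which is impossible because a nontrivial torus quotient would be $1$-dimensional (since the toric rank is even only if... ) — more cleanly, because $A_{\overline{K_\fp}}$ being isogenous to a product/square of elliptic curves with QM cannot have a one-dimensional toric part stable under $\mathcal{B}$: the $\mathcal{B}$-action on the character group of the torus would make a $1$- or $2$-dimensional $\Q$-vector space into a $\mathcal{B}$-module, forcing dimension divisible by $\dim_\Q \mathcal{B} = 4$, hence the toric part is $0$. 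Therefore $\dim V_\ell^{I_\fp} = 0$, the Swan conductor contributes $\delta_\fp \geq 0$, and $v_\fp(\mathfrak{L}) \geq 4$.

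The main obstacle I anticipate is handling the inertia invariants cleanly when $\fp$ lies above $\ell$ (so one cannot use the $\ell$-adic Tate module there) and, more substantially, making the ``the $\mathcal{B}$-action kills any odd/partial toric part'' argument rigorous: one must argue at the level of the Néron model (or of the $\ell$-divisible group and its toric part) that the quaternion order $\mathcal{O}_{\mathcal{B}} \subset \End_K(A)$ acts on the torus part $T$ and on its character group $X^*(T)$, and that $X^*(T)\otimes\Q$ is then a $\mathcal{B}$-module, hence of $\Q$-dimension a multiple of $4$; since $\dim T \leq 2$ this forces $T = 0$, i.e.\ potentially good reduction, and in fact good-up-to-the-inertia-action with no invariants because $\fp \mid \mathfrak{L}$. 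I would cite the standard fact (Serre–Tate, or Morita's results on QM abelian surfaces) that fake elliptic curves have potentially good reduction everywhere to shortcut this, and then the conductor formula gives the bound, with equality $v_\fp(\mathfrak{L}) = 4$ exactly when the reduction is tame (and $=$ potentially good with no inertia invariants, which is automatic once $T=0$ and $V_\ell^{I_\fp}=0$).
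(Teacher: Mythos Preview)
Your approach via the inertia invariants $V_\ell^{I_\fp}$ and the evenness constraint from the quaternionic action is essentially equivalent to the paper's N\'eron-model argument: the identity $\dim V_\ell^{I_\fp} = 2a + t$ (where $t,u,a$ are the toric, unipotent, and abelian ranks of the identity component of the special fiber) translates one into the other, and the paper's formula $v_\fp(\mathfrak{L}) = t + 2u + d_\fp$ is your formula rewritten. Your argument that the $\mathcal{B}$-action on the character lattice of the torus forces $t=0$ is correct and gives a self-contained proof of potentially good reduction (the paper simply cites Ribet for this step).

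However, there is a genuine gap. You assert that $\dim V_\ell^{I_\fp}=2$ means ``semistable of toric rank $2$,'' and then conclude from $t=0$ that $\dim V_\ell^{I_\fp}=0$. This is a non sequitur: $\dim V_\ell^{I_\fp}=2$ is also achieved by $(t,u,a)=(0,1,1)$, i.e.\ the connected N\'eron special fiber is an extension of an elliptic curve $B$ by $\mathbb{G}_a$. In that case the conductor exponent is $2u + d_\fp = 2 + d_\fp$, which can equal $2$. Potentially good reduction alone does \emph{not} give $V_\ell^{I_\fp}=0$; it only gives $t=0$, and your character-lattice trick says nothing about the abelian quotient.

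The paper closes exactly this gap with a further endomorphism argument: if $a=1$, then $\mathcal{B}=\End_K(A)\otimes\Q$ acts on the elliptic curve $B$ (via functoriality of the N\'eron model and the Albanese), forcing $B$ to be supersingular with $\End(B)\otimes\Q \cong \mathcal{B}$. But the endomorphism algebra of a supersingular elliptic curve is ramified at $\infty$ (Tate), whereas $\mathcal{B}$ is split at $\infty$ (Shimura). This contradiction rules out $a=1$, whence $u=2$ and $v_\fp(\mathfrak{L})\geq 4$. Your module-theoretic argument does not adapt here, since the abelian quotient has no character lattice to exploit; one needs the finer information about \emph{which} quaternion algebra arises.
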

\begin{proof}
  Let $A'$ denote the connected component of the special fiber at $\fp$ of the N\'eron model of $A$. By the Chevalley theorem on algebraic groups there is an exact sequence
  \begin{align*}
    0 \lra T\times U \lra A' \lra  B \lra 0,
  \end{align*}
with $B$ an abelian variety, $T$ a torus, and $U$ a unipotent group. Denote by $t$ the dimension of $T$, and by $u$ the dimension of $U$. The valuation of $\mathfrak{L}$ at $\fp$ is given by 
\begin{align*}
v_\fp(\mathfrak{L}) =  t +2u + d_\fp,
\end{align*}
where $d_\fp$ is the Swan conductor. It is well known that $A$ has potentially good reduction (see, e.g., \cite[Theorem $3$]{RibetEnds}), so that $t=0$. Since $A$ has bad reduction at $\fp$, we have that necessarily $u>0$. Therefore, in order to finish the proof, we need to rule out the case $u=1$.

If $u=1$ then $B$ is an elliptic curve. Any endomorphism of $A$ gives rise to an endomorphism of its Neron model, so that $\End(A)$ acts on $A'$. By functoriality we see that $\text{End}(A)$ also acts on $B$ (this follows, for instance, from the fact that $B$ is the Albanese variety of $A'$). Thus $B$ must be a supersingular elliptic curve and, moreover, $\End(B)\otimes \Q\simeq \End(A)\otimes \Q$. But this is impossible: by a theorem of Tate (\cite[Theorem 2]{tate-endomorphisms}), the endomorphism algebra of a supersingular elliptic curve is ramified at $\infty$; on the other hand, that of a fake elliptic curve is well known to be split at $\infty$ (this follows from results of Shimura \cite{shimura-analytic-familes}).
\end{proof}
\begin{corollary}
Let $f\in H^{n+s}(\Gamma_0^\fD(\fp\fm),\C)$ be a rational Hecke eigenclass, where $\fp$ is a prime that does not divide $\fD\fm$. Then the abelian variety associated to $f$ in Conjecture \ref{conj: cohomological C1} is an elliptic curve, rather than a fake elliptic curve.
\end{corollary}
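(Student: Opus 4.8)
The plan is to deduce the corollary directly from Proposition~\ref{prop: about fake elliptic curves} together with the description of the conductor given in Conjecture~\ref{conj: cohomological C1}. First I would invoke the conjecture: the eigenclass $f\in H^{n+s}(\Gamma_0^\fD(\fp\fm),\C)$ is associated either to an elliptic curve $E_f/K$ of conductor $\fD\fp\fm$ or, when $K$ is totally complex, to a fake elliptic curve $A_f/K$ of conductor $(\fD\fp\fm)^2$. (If $K$ has a real place the fake possibility does not arise at all, and we are done immediately, so the content is the totally complex case.) The goal is therefore to rule out the fake elliptic curve alternative.

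Next I would examine the exact power of $\fp$ dividing the two candidate conductors. Since $\fp\nmid\fD\fm$ by hypothesis, the ideal $\fp$ divides $\fD\fp\fm$ exactly to the first power, hence it divides $(\fD\fp\fm)^2$ exactly to the second power; that is, $v_\fp\big((\fD\fp\fm)^2\big)=2$. But Proposition~\ref{prop: about fake elliptic curves} asserts that for any fake elliptic curve $A$ over $K$ and any prime $\fp$ dividing its conductor $\mathfrak{L}$, one has $v_\fp(\mathfrak{L})\geq 4$. Applying this with $\mathfrak{L}=(\fD\fp\fm)^2$ and noting that $\fp\mid\mathfrak{L}$, we would obtain $2\geq 4$, a contradiction. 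Hence no fake elliptic curve of conductor $(\fD\fp\fm)^2$ exists, and the abelian variety attached to $f$ in Conjecture~\ref{conj: cohomological C1} must be an elliptic curve.

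There is essentially no obstacle here: the argument is a one-line valuation comparison once Proposition~\ref{prop: about fake elliptic curves} is in hand. The only point requiring a modicum of care is the remark about the real-place case, where Conjecture~\ref{conj: cohomological C1} only offers the elliptic-curve option to begin with, so the conclusion is vacuous there; and perhaps a word noting that the corollary is, strictly speaking, conditional on Conjecture~\ref{conj: cohomological C1}, while Proposition~\ref{prop: about fake elliptic curves} itself is unconditional. I would phrase the proof in two or three sentences accordingly.

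\begin{proof}
By Conjecture \ref{conj: cohomological C1}, the abelian variety attached to $f$ is either an elliptic curve of conductor $\fD\fp\fm$ or (only if $K$ is totally complex) a fake elliptic curve $A_f/K$ of conductor $(\fD\fp\fm)^2$. In the latter case, since $\fp\nmid \fD\fm$, we have $v_\fp\big((\fD\fp\fm)^2\big)=2$, which contradicts Proposition \ref{prop: about fake elliptic curves}. Hence the fake elliptic curve alternative cannot occur, and the abelian variety associated to $f$ is an elliptic curve.
\end{proof}
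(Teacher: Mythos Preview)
Your proof is correct and essentially identical to the paper's own argument: assume the associated abelian variety is a fake elliptic curve of conductor $(\fD\fp\fm)^2$, observe that $v_\fp$ of this conductor equals $2$ since $\fp\nmid\fD\fm$, and derive a contradiction with Proposition~\ref{prop: about fake elliptic curves}. The only cosmetic difference is that you make the real-place and conditional-on-the-conjecture remarks explicit, which the paper leaves implicit.
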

\begin{proof}
  If it was a fake elliptic curve its conductor would be $\mathfrak{L}=\fp^2\fm^2\fD^2$, with $\fp\nmid \fm\fD$ and therefore $v_\fp(\mathfrak{L})=2$. This would contradict Proposition \ref{prop: about fake elliptic curves}.
\end{proof}

\subsection{Construction of the $\fp$-adic lattice}\label{subsection: a conjectural construction of the lattice} Let $\fp$ be a prime of $K$ and put $\C_\fp= \widehat{\overline K}_\fp$. Recall Tate's uniformization: If $E$ is an elliptic curve over $K$ whose conductor is exactly divisible by $\fp$, there exists a lattice $\Lambda\subset \C_\fp^\times$ such that $E(\C_\fp)\simeq \C_\fp^\times/\Lambda$.

Let $\fn$ be an ideal coprime to $\fp\fD$, and let $f\in H^{n+s}(\Gamma_0^\fD(\fp\fn),\C)$ be a new rational Hecke eigenclass. The goal of this section is to construct a lattice $\Lambda_f\subset \C_\fp^\times$ which we conjecture is homothetic to the Tate lattice of some elliptic curve $E_f$ over $K$ that is modular by $f$. This will be done in subsection \ref{subsec: construction of the lattice} below. Before that, we briefly recall some of the tools that will be used and we fix some notation.

\subsubsection{Arithmetic and $S$-arithmetic groups} Let $R_0^\fD(\fn)$ and $R_0^\fD(\fp\fn)$ denote Eichler orders in $B$ of the indicated levels, chosen in such a way that $R_0^\fD(\fp\fn)\subset R_0^\fD(\fn)$. We set $\Gamma_0^\fD(\fp\fn)=R_0^\fD(\fp\fn)^\times_1/\{\pm 1\}$ and $\Gamma_0^\fD(\fn)=R_0^\fD(\fn)^\times_1/\{\pm 1\}$.

For a set of primes $S$ of $\cO_K$ we let $\cO_{K,S}$ denote the $S$-integers of $K$, that is the set of $x\in K$ such that $v_\fq(x)\geq 0$ for all primes $\fq\not\in S$. We put $R = R_0^\fD(\fp\fn)\otimes_{\cO_K}\cO_{K,\{\fp\}}$ and $\Gamma = R^\times_1$. Observe that $\Gamma$ is an $S$-arithmetic group that contains the arithmetic groups $\Gamma_0^\fD(\fp\fn)$ and $\Gamma_0^\fD(\fn)$.

\subsubsection{(Co)homology groups and Hecke operators}\label{subsec: cohomology groups and hecke operators} If $V$ is a $R_0^\fD(\fp\fn)^\times-$module the groups $H^{i}(\Gamma_0^\fD(\fp\fn),V)$ and $H_i(\Gamma_0^\fD(\fp\fn),V)$ are endowed with the action of Hecke operators $\{T_\fl\}$ for ${\fl\nmid \fD}$. Following the usual notational conventions, we set $U_\fl = T_\fl$ for $\fl\mid \fp\fn$. As noted in Remark \ref{rk: restate as Gamma_0}, one also has involutions $T_u$ associated to units $u\in U'_+/(\cO_K^\times)^2$. Moreover, there are Atkin-Lenher involutions $W_\fl$ at the primes $\fl\mid \fp\fn$. For instance, if $\pi$ is a generator of $\fp$ which is positive at the real places of $K$, then $W_\fp$ is induced by an element $\omega_\pi\in R_0^\fD(\fp\fn)^\times$ of reduced norm $\pi$ and which normalizes $\Gamma_0^\fD(\fp\fn)$. 

If $G$ denotes either $\Gamma_0^\fD(\fn)$ or $\Gamma$, then there are analogous Hecke and Atkin--Lehner operators acting on $H^i(G,V)$ and $H_i(G,V)$.


\subsubsection{Bruhat--Tits tree, harmonic cocycles,  and measures}
Let $\cT$ denote the Bruhat--Tits tree of $\PGL_2(K_\fp)$. Its set of vertices $\cV$ is identified with the set of homothety classes of $\cO_{K_\fp}$-lattices in $K_\fp^2$. Its set of directed edges $\cE$ consists on ordered pairs $(w_1,w_2)\in \cV\times\cV$ such that each $w_i$ can be represented by a lattice $\Lambda_i$ with $\fp \Lambda_1\subsetneq \Lambda_2\subsetneq \Lambda_1$. The natural action of $\PGL_2(K_\fp)$ on the lattices induces an action on $\cT$.

For $e=(w_1,w_2)\in \cE$ we let $s(e)=w_1$ denote its source, $t(e)=w_2$ its target, and $\bar e= (w_2,w_1)$ its opposite.  Let $v_0\in\cV$ be the vertex corresponding to $\cO_{K_\fp}\times \cO_{K_\fp}$ and $v_1$ that corresponding to $\cO_{K_\fp}\times\fp\cO_{K_\fp}$, and let $e_0\in\cE$ be the directed edge $(v_0,v_1)$. We denote by $\cV_0$ the set of even vertices (i.e., those at an even distance of $v_0$), and by $\cV_1$ the set of odd vertices. Similarly, $\cE_0$ stands for the set of even edges (those $e$ such that $s(e)$ is even) and $\cE_1$ for the odd edges.

We can, and do, fix a splitting isomorphism
\begin{align}
  \iota_\fp \colon B\otimes_K K_\fp \lra M_2(K_\fp)
\end{align}
such that $\iota_p(R_0^\fD(\fn))\simeq M_2(\cO_{K_\fp})$ and $\iota_\fp(R_0^\fD(\fp\fn))\simeq \{\smtx a b c d \in\M_2(\cO_{K_\fp})\colon c\in\fp\}$. In this way we identify $\Gamma_0^\fD(\fp\fn)$, $\Gamma_0^\fD(\fn)$, and $\Gamma$ with their images in $\PGL_2(K_\fp)$ under $\iota_\fp$ , so that they acquire an action on $\cT$. It turns out that $\Gamma$ acts transitively on $\cE_0$, and this gives rise to one-to-one correspondences
\begin{align}\label{eq: correspondences}
  \Gamma/\Gamma_0^\fD(\fn)\leftrightarrow \cV_0  \  \text{ and } \ \Gamma/\Gamma_0^\fD(\fp\fn)\leftrightarrow \cE_0,
\end{align}
induced by $g\mapsto gv_0$ and $g\mapsto g e_0$, respectively. Similarly, if we set $\widehat\Gamma_0^\fD(\fn) = \omega_\pi \Gamma_0^\fD(\fn) \omega_\pi^{-1}$ then the maps $g\mapsto gv_1$ and $g\mapsto ge_1$ induce bijections
\begin{align}\label{eq: correspondences 2}
  \Gamma/\widehat\Gamma_0^\fD(\fn)\leftrightarrow \cV_1  \  \text{ and } \ \Gamma/\Gamma_0^\fD(\fp\fn)\leftrightarrow \cE_1.
\end{align}

As a consequence of \eqref{eq: correspondences}, for any abelian group $A$ (with trivial $\Gamma_0^\fD(\fn)$-action) we have isomorphisms 
\begin{align}\label{eq: induced modules}
\Ind_{\Gamma_0^\fD(\fp\fn)}^\Gamma A\simeq \cF(\cE_0,A)\ \text{ and }\  \Ind_{\Gamma_0^\fD(\fn)}^\Gamma(A)\simeq \cF(\cV_0,A),
\end{align}
where $\Ind$ stands for the induced module and $\cF(X,Y)$ for the set of functions from $X$ to $Y$. Similar isomorphisms are deduced from \eqref{eq: correspondences 2}.

Let $\cF_0(\cE,A)$ be the set of functions $\mu\colon \cE\ra A$ such that $\mu(e) + \mu(\bar e)=0$. There are two degeneracy maps $\varphi_s,\varphi_t\colon \cF(\cE,A)\ra \cF(\cV,A)$ given by 
\begin{align*}\varphi_s(\mu)(v)= \sum_{s(e)=v} \mu(e)\ \text{ and } \ \varphi_t(\mu)(v)= \sum_{t(e)=v} \mu(e).
\end{align*}
The map $\varphi_s$ sends $\cF_0(\cE,A)$  exhaustively onto $\cF(\cV,A)$. The group of \emph{$A$-valued harmonic cocycles}, denoted $\text{HC}(A)$, is defined to be the kernel; that is, it is defined by the exact sequence
\begin{align*}
  0\lra \text{HC}(A)\lra \cF_0(\cE,A)\lra \cF(\cV,A)\lra 0.
\end{align*}

Denote by $\text{Meas}_0(\P^1(K_\fp),A)$ the set of $A$-valued measures on $\P^1(K_\fp)$ with total measure $0$. If $\cB$ is the set of compact-open balls in $\P^1(K_\fp)$ then, by definition, $\mu\in \text{Meas}_0(\P^1(K_\fp),A)$ is a function $\mu\colon \cB\ra A$ such that $f(\P^1(K_\fp))=0$ and, for any $B\in\cB$, the following compatibility condition holds:
\begin{align}\label{eq: compatibility of the measures}
 \text{ if $B=\bigsqcup B_i$ is a finite decomposition with $B_i\in\cB$, then $\mu(B)=\sum \mu(B_i)$}.
\end{align}
One consequence is that, in particular,  $\mu (\P^1(K_\fp)\setminus B)= -\mu(B)$. 

An example of compact-open ball in $\P^1(K_\fp)$ is $\cO_{K_\fp}$, the ring of integers of $K_\fp$. Given $B\in \cB$, either $B$ or $\P^1(K_\fp)\setminus B$ can be expressed as $\gamma\cO_{K_\fp}$ for some $\gamma\in \Gamma$. Moreover, the stabilizer of $\cO_{K_\fp}$ in $\Gamma$ is $\Gamma_0^\fD(\fp\fn)$. This facts, together with \eqref{eq: correspondences}, imply that any $\mu\in \text{Meas}_0(\P^1(K_\fp),\Z)$ can be identified with a function $\mu\colon \cE_0 \ra A$ satisfying the compatibility coming from \eqref{eq: compatibility of the measures}. This compatibility condition turns out to be that of being a harmonic cocycle. Therefore, we have an isomorphism $\text{Meas}_0(\P^1(K_\fp),A)\simeq \text{HC}(A)$, which we will use to identify measures and harmonic cocycles from now on.

\subsubsection{Multiplicative integration pairing}
 Let $\mathcal{C}(\P^1(K_\fp),\Z)$ denote the $\Z-$valued continuous functions on $\P^1(K_\fp)$. For $\mu\in \text{Meas}_0(\P^1(K_\fp),\Z)$  and $f\in \mathcal{C}(\P^1(K_\fp),\Z)^\times$ one defines the \emph{multiplicative integral} of $f$ with respect to $\mu$ to be 
\begin{align}\label{eq: mult int}
  \Xint\times_{\P^1(K_\fp)} f d\mu = \lim_{||\cU||\ra 0}\prod_{U\in \cU} f(t_U)^{\mu(U)},
\end{align}
where $\cU$ runs over coverings of $\P^1(K_\fp)$ by compact open balls whose diameter approaches to $0$, and $t_U$ is any sample point in $U$.

Let $\cH_\fp=\C_\fp\setminus K_\fp$ denote the $\fp$-adic upper half plane. Using the above multiplicative integrals one defines the following pairing:
\begin{align*}
\begin{array}{ccc}
  \text{Meas}(\P^1(K_\fp),\Z)  \times   \Div^0(\cH_\fp) & \lra & \C_\fp^\times\\
(\mu  ,  \tau_1-\tau_2) & \longmapsto &\displaystyle \Xint\times_{\P^1(K_\fp)} \left( \frac{t-\tau_2}{t-\tau_1} \right)d\mu.
\end{array}
\end{align*}
This induces, by cap product, a \emph{multiplicative integration pairing} between $\Gamma$-(co)homology groups:
\begin{align}
  \Xint\times \langle\ ,\  \rangle \colon H^i(\Gamma,\text{HC}(\Z))\times H_i(\Gamma,\Div^0\cH_\fp)\lra \C_\fp^\times.
\end{align}

Denote by $\Omega^1_{\cH_\fp}(\Z)$ the $\Z$-module of rigid-analytic $1$-forms on $\cH_\fp$ for which all of their residues are in $\Z$. 
It is well known that by considering residues of harmonic cocycles on carefully chosen annuli in $\cH_\fp$, one can exhibit an isomorphism between $HC(\Z)$ and $\Omega^1_{\cH_\fp}(\Z)$ 
(see \cite[\S4.2]{GMS}). Thus we may as well consider the above integration pairing using cohomology classes with coefficients in $\Omega^1_{\cH_\fp}(\Z)$, as done in \cite{GMS} and alluded to in the Introduction. 

\subsubsection{Construction of the lattice}\label{subsec: construction of the lattice}
Recall the rational Hecke eigenclass $f\in H^{n+s}(\Gamma_0^\fD(\fp\fn),\Z)$. In this section we construct a lattice $\Lambda_f\subset\C_\fp^\times$. More precisely, $\Lambda_f$ will be the lattice generated by a quantity $q_f\in\C_\fp^\times$ that will be defined as
\begin{align*}
  q_f = \Xint\times\langle \omega_f,\Delta_f\rangle, 
\end{align*}
for certain cohomology class $\omega_f\in H^{n+s}(\Gamma,\text{HC}(\Z))$ and homology class $\Delta_f\in H_{n+s}(\Gamma,\Div^0\cH_\fp)$. Next, we give the definition of $\omega_f$ and $\Delta_f$.

By Shapiro's Lemma and \eqref{eq: induced modules} we have the following isomorphisms:
\begin{align}\label{eq: shapiro}
  H^{n+s}(\Gamma_0^\fD(\fp\fn),\Z) \simeq H^{n+s}(\Gamma,\Ind_{\Gamma_0^\fD(\fp\fn)}^\Gamma \Z)\simeq H^{n+s}(\Gamma,\cF(\cE_0,\Z)).
\end{align}
By definition of harmonic cocycles we have an inclusion $\text{HC}(\Z)\subset \cF_0(\cE,\Z)\simeq \cF(\cE_0,\Z)$. Therefore, there is a natural map
\begin{align*}
  \rho\colon H^{n+s}(\Gamma,\text{HC}(\Z))\lra H^{n+s}(\Gamma,\cF(\cE_0,\Z)).
\end{align*}
It turns out that the class corresponding to $f$ under the identifications \eqref{eq: shapiro} lies in the image of $\rho$, and we define $\omega_f\in H^{n+s}(\Gamma,\text{HC}(\Z))$ to be an element such that $\rho(\omega_f)= f$.

By duality between homology and cohomology groups, the Hecke eigenclass $f\in H^{n+s}(\Gamma_0^\fD(\fp\fn),\Z)$ gives rise to $\hat f\in H_{n+s}(\Gamma_0^\fD(\fp\fn),\Z)$ characterized, up to scaling, by the fact that it has the same eigenvalues as $f$ for all the Hecke operators.

Since $\Gamma$ is isomorphic to the amalgamated product $ \Gamma^\fD_0(\fn)\star_{\Gamma_0^\fD(\fp\fn)}\Gamma_0^\fD(\fn)$, the corresponding Mayer--Vietoris sequence gives, at degree $n+s$:
\begin{align}\label{eq: exact sequence in homology} 
\cdots \lra  H_{n+s+1}(\Gamma,\Z)\stackrel{d}{\lra}  H_{n+s}(\Gamma_0^\fD(\fp\fn),\Z) \stackrel{\partial_*}{\lra} H_{n+s}(\Gamma_0^\fD(\fn),\Z)^2\lra \cdots .
\end{align}

The fact that $\hat f$ is new at $\fp$ is equivalent to $\partial_*(\hat f)=0$. Therefore, there exists a homology class $c_f\in H_{n+s+1}(\Gamma,\Z)$ such that $d(c_f) = \hat f$.

Now consider the exact sequence defining $\Div^0(\cH_\fp)$:
\begin{align*}
  0\lra \Z \lra \Div \cH_\fp \stackrel{\deg}{\lra} \Div^0 \cH_\fp \lra 0.
\end{align*}
The homology exact sequence gives a connecting homomorphism
\begin{align}\label{eq: connecting homomorphism in homology}
\delta\colon  H_{n+s+1}(\Gamma,\Z)\lra H_{n+s}(\Gamma,\Div^0\cH_\fp),
\end{align}
and we define $\Delta_f = \delta(c_f)$.

Finally, we define the period $q_f$ by
\begin{align*}
  q_f = \Xint\times \langle \omega_f , \Delta_f \rangle \in \C_\fp^\times,
\end{align*}
and the $\fp$-adic lattice $\Lambda_f = q_f ^\Z\subset \C_\fp^\times$.
\begin{conjecture}\label{conj: our main}
  The lattice $\Lambda_f$ is commensurable with the Tate lattice of an elliptic curve $E_f$ over $K$ which is modular by $f$.
\end{conjecture}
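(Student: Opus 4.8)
Since the statement already presupposes the existence of $E_f$ --- the content of Conjecture~\ref{conj: cohomological C1}, open in general --- a proof can at present only be conditional on modularity, or unconditional in ranges where a candidate curve is available and can be certified (for $K$ totally real, when the eigensystem of $f$ transfers via Jacquet--Langlands to a newform on a Shimura curve; for $K$ a CM field, via a Faltings--Serre argument). So assume $E_f$ exists with $\fp$ exactly dividing its conductor, hence with a Tate parametrization $\C_\fp^\times/q_{E_f}^\Z$ at $\fp$. Two rank-one lattices $q^\Z, q'^\Z\subset\C_\fp^\times$ are commensurable precisely when $q^m = q'^n$ for some nonzero integers $m,n$; as $\ord_\fp q_f$ and $\ord_\fp q_{E_f}$ are nonzero integers --- for $q_f$ because the valuation of the defining multiplicative integral is computed by the (integer, Hecke-normalized) pairing of $\Delta_f$ against the harmonic cocycle underlying $\omega_f$ --- the assertion reduces to the equality of $\fp$-adic $\mathcal{L}$-invariants
\[
\mathcal{L}(\Lambda_f) \;=\; \mathcal{L}(q_{E_f}^\Z).
\]

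The plan is to compute both sides as a logarithmic derivative along a $\fp$-adic analytic family and then run an exceptional-zero (Greenberg--Stevens) argument. On the automorphic side one deforms $f$ in a Hida, resp.\ Coleman, family over $\fp$-adic weight space using the overconvergent cohomology machinery of~\cite{gm} that already underlies the construction of $q_f$: one interpolates the cohomology class $\omega_f$ and the $S$-arithmetic homology class $\Delta_f$ and reorganizes the defining double integral to obtain
\[
\mathcal{L}(\Lambda_f) \;=\; -2\,\frac{d}{dk}\log_\fp a_\fp(k)\Big|_{k=2},
\]
where $a_\fp(k)$ is the $U_\fp$-eigenvalue along the family (so $a_\fp(2) = \pm 1$); this identifies the ``multiplicative-integral $\mathcal{L}$-invariant'' with the Greenberg--Stevens $\mathcal{L}$-invariant and is a formal manipulation once the family exists. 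On the Galois side, Tate's theory identifies $\mathcal{L}(q_{E_f}^\Z)$ with the Fontaine--Mazur $\mathcal{L}$-invariant of the restriction $\rho_{E_f,\fp}$ of the Galois representation of $E_f$ to the decomposition group at $\fp$, which is semistable, non-crystalline, and (up to an unramified twist) an extension of $\Q_p$ by $\Q_p(1)$. One then matches the two expressions by passing through the $\fp$-adic family of Galois representations carrying $\rho_{E_f}$: local-global compatibility in families shows that the Sen weight and the crystalline Frobenius eigenvalue of the family restricted to the decomposition group at $\fp$ are governed by $a_\fp(k)$, and differentiating Tate's fundamental exact sequence along the family recovers the Fontaine--Mazur $\mathcal{L}$-invariant as the same logarithmic derivative of $a_\fp(k)$. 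This is exactly the exceptional-zero mechanism of the Mazur--Tate--Teitelbaum conjecture, transported to the quaternionic $S$-arithmetic setting over $K$.

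I expect the last step, in the stated generality, to be the main obstacle: it needs eigenvarieties and families of Galois representations over $K$ and for the inner forms $B$, with the expected interpolation and local-global compatibility at $\fp$ --- only partially known outside the totally real and CM cases --- together with an $R = \T$-type link between the automorphic family through $\omega_f$ (built from the cohomology of the $S$-arithmetic quaternionic group $\Gamma$) and that Galois family. A more geometric route avoids $\mathcal{L}$-invariants entirely but applies only when $B$ is a division algebra and the eigensystem of $f$ comes from a Shimura curve $X$ with a \v{C}erednik--Drinfeld uniformization at $\fp$: then, in the spirit of Bertolini--Darmon and Dasgupta--Greenberg, $q_f$ is literally a rigid-analytic period of the toric part of $\operatorname{Jac} X$ at $\fp$, hence the Tate parameter of an abelian variety $\fp$-adically isogenous to $E_f$; one still has to control the isogeny --- which is why one expects only commensurability, not equality --- and the behaviour at the auxiliary real places of $K$ that split in $B$, but the comparison becomes a statement about $\fp$-adic uniformization of Jacobians rather than about $p$-adic $L$-functions.
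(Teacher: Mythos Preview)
The statement you are attempting to prove is labeled \emph{Conjecture} in the paper, and the paper does \emph{not} prove it. The paper's treatment consists of stating the conjecture and, in the remark immediately following, recording its status: known for $K=\Q$ (Darmon's reduction to Mazur--Tate--Teitelbaum, settled by Greenberg--Stevens, in the split case; Dasgupta--Greenberg and Longo--Rotger--Vigni in the quaternionic case), and open for all other $K$. The rest of the paper supplies algorithms and extensive numerical evidence, not a proof. So there is no ``paper's own proof'' to compare against.

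Your proposal is not a proof either, and you say as much: it is a strategy sketch whose two branches mirror exactly the two known proofs over $\Q$. The Greenberg--Stevens branch (identify $\mathcal{L}(\Lambda_f)$ with $-2\,\frac{d}{dk}\log a_\fp(k)$ and match with the Fontaine--Mazur $\mathcal{L}$-invariant via families of Galois representations) is the right shape, but as you note, the ingredients --- eigenvarieties for the relevant inner forms over general $K$, families of Galois representations with local-global compatibility at $\fp$, and an $R=\T$ link back to the cohomology of $\Gamma$ --- are not available in the generality needed, especially once $K$ has a complex place. The \v{C}erednik--Drinfeld branch likewise requires a Shimura-curve transfer that is simply absent when $n+s>1$ on the quaternionic side or when no suitable $B$ exists. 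In short, your write-up is an accurate survey of why the conjecture is believed and how one would attack it, but it does not close the gap the paper leaves open, and should not be presented as a proof.
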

\begin{remark}
The above conjecture is known for $K=\Q$: for $B=\M_2(\Q)$ it was proven by Darmon \cite[Theorem 1]{darmon-integration}, who showed that in fact is equivalent to the Mazur--Tate--Teitelbaum conjecture, now a theorem of Greenberg--Stevens \cite{greenberg-stevens}; for $B$ a quaternion division algebra over $\Q$ it was proven by Dasgupta--Greenberg \cite{greenberg-dasgupta} and, independently, by Longo--Rotger--Vigni \cite{LRV}. Conjecture \ref{conj: our main} was stated for totally real $K$ in \cite{Gr} and for $K$ of arbitrary signature in \cite{GMS}. To the best of our knowledge, in these cases it remains open.
\end{remark}

\section{Explicit methods and algorithms}\label{sec: explicit methods and algorithms}
 In this section we describe explicit algorithms for computing $\Lambda_f$, in the particular case that $n+s=1$. Observe that $n+s$ is the degree of the (co)homology groups involved in the construction of $\Lambda_f$, and this is precisely the reason why we impose this restriction: we want to work with (co)homology groups of degree $1$, because they are easier to handle computationally. 

Recall that a number field is said to be almost totally real (ATR for short) if it has one complex place. That is, if it is of signature $(r,1)$ for some $r\geq 0$. The condition $n+s = 1$ implies that $K$ must be either totally real or almost totally real, which we assume from now on. 

\begin{remark}
We believe that it should be possible to extend the algorithms of this section to (co)homology groups of degrees $>1$, and that this would be interesting because it would allow to do computations in fields $K$ of arbitrary signature. However, we have not made any serious attempt in this direction.
\end{remark}

The input for the algorithms of this section is the following: a quaternion algebra $B/K$ of discriminant $\fD$ which is split at one archimedean place, an ideal $\fn$ coprime to $\fD$, and a prime $\fp$ such that $\fp\nmid \fn\fD$. The aim is to compute
\begin{enumerate}
\item the rational Hecke eigenclasses $f\in H^1(\Gamma_0^\fD(\fp\fn),\Z)$ or, equivalently, the rational Hecke eigenclasses $\hat f\in H_1(\Gamma_0^\fD(\fp\fn),\Z)$.
\end{enumerate}
 Then, for each rational Hecke eigenclass (if any) we compute 
\begin{enumerate}
\setcounter{enumi}{1}
\item the homology class $\Delta_f\in H_1(\Gamma,\Div^0\cH_\fp)$, 
\item the cohomology class $\omega_f\in H^1(\Gamma,\text{HC}(\Z))$, and 
\item the period $q_f = \Xint\times\langle \omega_f,\Delta_f \rangle\in \C_\fp^\times$. 
\end{enumerate}
We will take for granted the algorithms for working with quaternion algebras and their orders (cf., e.g., \cite{JVidentifying}), for instance those implemented in Magma \cite{magma}. Key to the methods that we present in this section are also the algorithms for computing arithmetic groups of the form $\Gamma_0^\fD(\fm)$. For quaternion algebras over totally real fields they are due to John Voight \cite{voight}, and over almost totally real fields to Aurel Page \cite{page}. In particular, we assume that there are algorithms for computing a presentation of $\Gamma_0^\fD(\fm)$ in terms of generators and relations and to solve the word problem, that is, any $g\in\Gamma_0^\fD(\fm)$ can be effectively expressed in terms of the generators.

Let us also fix some notation and conventions regarding homology and cohomology groups. Let $G$ denote a group and $A$ an abelian $G$-module. We will work with the so called bar resolution, in which the group of $i$-chains is taken to be $\Z[G]\otimes_\Z\stackrel{i)}{\cdots}\otimes_\Z\Z[G]\otimes_\Z A$. The boundary maps, which we only need in degrees $1$ and $2$, are given by
\begin{align*}
  \partial_1(g\otimes a) = ga -a, \  \text{ and }\ \partial_2(g_1\otimes g_2\otimes a) = g_2\otimes g_1^{-1} a -g_1g_2\otimes a +g_1\otimes a.
\end{align*}
For cohomology, the (inhomogeneous) $i$-cochains are the maps from $G^i$ with values in $A$, and the coboundaries in degrees $0$ and $1$ are
\begin{align*}
  \partial^0(a)(g) = g^{-1}a-a, \  \text{ and }\ \partial^1(c)(g_1,g_2) = g_1c(g_2)-c(g_1g_2)+c(g_1).
\end{align*}
\subsection{The rational Hecke eigenclass}\label{subsec: the rational eigenclass} Finding rational Hecke eigenclasses amounts to compute matrices of Hecke operators acting on $H^1(\Gamma_0^\fD(\fp\fn),\Q)$ or $H_1(\Gamma_0^\fD(\fp\fn),\Q)$. For totally real number fields, one can use the algorithms of \cite{greenberg-voight}, which in fact are valid more generally for cohomology groups of degree $>1$. In this section we use the explicit presentations and solutions to the word problem provided by \cite{page} to treat also the case of ATR fields, but only in (co)homological degree $1$. We present the methods just for homology, although everything can be easily adapted to cohomology as well.

The main idea is that homology in degree $1$ is the same as the abelianized of the group. Indeed, for any group $G$ there is a canonical isomorphism $\phi\colon G_\text{ab}\simeq H_1(G,\Z)$. If we identify the abelianized $G_\text{ab}$ with $G/[G,G]$ (here $[G,G]$ is the derived subgroup), and $H_1(G,\Z)$ with $\Z[G]/\partial_2(\Z[G]\otimes\Z[G])$, then $\phi$ is induced by the map (which, by abuse of notation, we also call $\phi$)
\begin{align}\label{eq: phi}
  \begin{array}{cccc}
\phi \colon &  G & \lra & \Z[G]\\
 & g & \longmapsto & g.
\end{array}
\end{align}
Using the algorithms of \cite{voight} and \cite{page} we can compute a presentation for $\Gamma_0^\fD(\fp\fn)$ of the form
\begin{align*}
\Gamma_0^\fD(\fp\fn) = \langle u_1,\dots,u_b \mid  r_1,\dots ,r_c \rangle, 
\end{align*}
where the $u_i$'s are generators and the $r_j$'s relations. From this, it is easy to compute generators $\{v_1,\dots, v_e\}$ for $\Gamma_0^\fD(\fp\fn)_{\text{ab}}$. Suppose that $v_1,\dots,v_d$ are of infinite order and the rest are torsion. That is to say, $\Gamma_0^\fD(\fp\fn)_\text{ab}\simeq \Z^d\oplus \text{Torsion}$. The torsion part is not important, as we are actually interested in the Hecke action on $H_1(\Gamma_0^\fD(\fp\fn),\Q)$. Therefore, for a prime $\fl\nmid \fp\fn\fD$ the Hecke operator $T_\fl$ will be described by a matrix $M(T_\fl)\in \M_d(\Z)$, which we next explain how to compute.

Let $\pi_\fl\in R_0^\fD(\fp\fn)$ be an element whose reduced norm generates $\fl$ and is positive under the real embeddings of $K$. Then there is a decomposition 
\begin{align*}
  \Gamma_0^\fD(\fp\fn)\pi_\fl\Gamma_0^\fD(\fp\fn) = \bigsqcup_{i=0}^{|\fl|} g_i\Gamma_0^\fD(\fp\fn).
\end{align*}
Since we know that there are $|\fl|+1$ cosets, the $g_i$'s are easy to find in practice. Indeed, all of them are of the form $g\pi_\fl$ with $g\in\Gamma_0^\fD(\fp\fn)$. One can run over different $g$'s in $\Gamma_0^\fD(\fp\fn)$ and check for equivalency modulo $\Gamma_0^\fD(\fp\fn)$ on the right, until $|\fl|+1$ inequivalent cosets are found.

Now, for each $i=0,\dots,|\fl|$ let $t_i\colon \Gamma_0^\fD(\fp\fn)\ra \Gamma_0^\fD(\fp\fn)$ be the map defined by means of the equation
\begin{align*}
  h^{-1}g_i = g_{h(i)}t_i(h)^{-1},
\end{align*}
for some index $h(i)\in \{0,\dots,|\fl|\}$. Suppose that $A$ is a $R_0^\fD(\fp\fn)^\times$-module and let $c=\sum_h h\otimes a_h\in \Z[\Gamma_0^\fD(\fp\fn)]\otimes A$ be a cycle. We denote by $[c]$ the class of $c$ in $H_1(\Gamma_0^\fD(\fp\fn),A)$.  Then a cycle representing $T_\fl([c])$ is given by the following formula (cf. \cite[\S 1]{ash-stevens}):
\begin{align}\label{eq: T_l}
  T_\fl([c]) = \sum_{i=0}^{|\fl|}\sum_{h} t_i(h)\otimes g_i^{-1} a_h.
\end{align}

Each generator $v_i$ gives rise to a cycle $[v_i]\in Z_1(\Gamma_0^\fD(\fp\fn),\Z)$. Then formula \eqref{eq: T_l} gives explicitly $T_\fl([v_i])$, regarded as an element of $\Z[G]$. It corresponds, via $\phi$, to an element of $\Gamma_0^\fD(\fp\fn)_\text{ab}$ which, using an algorithmic solution to the word problem of \cite{voight} and \cite{page}, we can express as $\sum_{j=1}^e a_{ji}v_j$ for some integers $a_{ji}$. Since we are only interested in the non-torsion generators, we just disregard the part corresponding to torsion and then the $i$-th column of $M(T_\fl)$ is given by $(a_{1i},\dots,a_{di})^t$.

Similarly, for any $u\in U_+'$, let $\omega_u\in R_0^\fD(\fp\fn)^\times$ be an element of reduced norm $u$. The involution $T_u$ is given by the formula
\begin{align*}
 T_u([c])=\sum_h \omega_u^{-1} h \omega_u\otimes \omega_u^{-1}a_g,
\end{align*}
and we can compute its matrix $M(T_u)\in\M_d(\Z)$ by the same procedure as with the Hecke operators at finite primes $T_\fl$.

Now, in order to determine the rational Hecke eigenclasses one decomposes the free part of $\Gamma_0^\fD(\fp\fn)_\text{ab}$ into simultaneous eigenspaces with respect to the action of $M(T_u)$, for all $u\in U_K'/(\cO_K^\times)^2$, and the matrices $M(T_\fl)$, for several $\fl$'s until all the eigenspaces are irreducible (typically a few $\fl$'s will suffice). The one dimensional eigenspaces, if any, correspond to the rational Hecke eigenclasses. 

In view of what we explained, a rational Hecke eigenclass $f$ will be regarded, in practice, as an element $\gamma_f\in \Gamma_0^\fD(\fp\fn)$ with the property that for all $\fl\nmid \fp\fn\fD$ one has $[T_\fl([\gamma_f])]= a_\fl [\gamma_f]$ for some $a_\fl\in \Z$, where $T_\fl$ is given by the formula \eqref{eq: T_l}. To lighten the notation, when there is no risk of confusion we will identify $\gamma_f$ with its homology class $[\gamma_f]$; thus we think of $\gamma_f$ as an element of $ H_1(\Gamma_0^\fD(\fp\fn),\Z)$ when convenient.
\subsection{The homology class}\label{subsec: the homology class}
In this subsection we take as input the $\gamma_f\in H_1(\Gamma_0^\fD(\fp\fn),\Z)$ constructed in \S\ref{subsec: the rational eigenclass}, and we provide an algorithmic procedure to compute the homology class $\Delta_f\in H_1(\Gamma,\Div^0\cH_\fp)$ defined in \S\ref{subsec: construction of the lattice}. The first step is to compute the element $c_f\in H_2(\Gamma,\Z)$ which maps to $\gamma_f$ under the map $d\colon H_2(\Gamma,\Z)\ra H_1(\Gamma_0^\fD(\fp\fn),\Z)$ of \eqref{eq: exact sequence in homology}. 

We will again freely use the identification $ G_\text{ab}\simeq H_1(G,\Z)$. Recall that it is induced by the map $\phi$ of \eqref{eq: phi}.  The following properties are straightforward to check:
\begin{enumerate}
 \item $\phi(g_1g_2)=\phi(g_1)+\phi(g_2)-\partial_2(g_1\otimes g_2);$
 \item $\phi([a,b])=\partial_2\left( a\otimes a^{-1} +b\otimes b^{-1} -a\otimes b a^{-1} b^{-1} -b\otimes a^{-1}b^{-1} 
-a^{-1}\otimes b^{-1}+2 \cdot 1_G\otimes 1_G \right)$, where $[a,b]$ denotes the commutator and $1_G$ the identity of $G$.
\end{enumerate}
The second property implies that any element in $[G,G]$ is mapped to a boundary. Such boundary can be effectively computed, as we record in the next lemma.
\begin{lemma}\label{lemma: partial}
 Let $g\in [G,G]$, and suppose that an explicit expression of $g$ as product of commutators is known. Then 
there is an algorithm for explicitly computing  a chain $b\in \Z[G]\otimes\Z[G]$ such that $\phi(g)=\partial_2 (b)$.
\end{lemma}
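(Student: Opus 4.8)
The plan is to reduce the statement to the two combinatorial identities (1) and (2) recorded just before the lemma, together with an algorithmic solution to the word problem for $G$ (which is available by hypothesis, e.g.\ via \cite{voight} and \cite{page}). The input is an element $g\in[G,G]$ presented explicitly as a product of commutators, say $g=\prod_{k=1}^m [a_k,b_k]$ with $a_k,b_k\in G$. First I would observe that property (2) gives, for each single commutator $[a_k,b_k]$, an \emph{explicit} chain
\begin{align*}
 b_k = a_k\otimes a_k^{-1} + b_k\otimes b_k^{-1} - a_k\otimes b_k a_k^{-1} b_k^{-1} - b_k\otimes a_k^{-1} b_k^{-1} - a_k^{-1}\otimes b_k^{-1} + 2\cdot 1_G\otimes 1_G
\end{align*}
with $\phi([a_k,b_k])=\partial_2(b_k)$, and this chain is computed by nothing more than forming group products and inverses in $G$, all of which are effective.

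Next I would handle the product. Since $\phi$ is additive only up to a $\partial_2$-correction, I would apply property (1) inductively to peel off the factors of $g$ one at a time: writing $g = [a_1,b_1]\cdot g'$ with $g'=\prod_{k\geq 2}[a_k,b_k]$, property (1) yields
\begin{align*}
 \phi(g) = \phi([a_1,b_1]) + \phi(g') - \partial_2\bigl([a_1,b_1]\otimes g'\bigr).
\end{align*}
Iterating, $\phi(g)$ equals $\sum_{k=1}^m \phi([a_k,b_k])$ plus an explicit sum of boundary terms $\partial_2(c_j)$ coming from the repeated use of (1), where each $c_j$ is again built by multiplying known group elements. Substituting the expression for each $\phi([a_k,b_k])$ from the previous paragraph, all terms are now of the form $\partial_2(\text{something explicit})$, so collecting them gives a chain $b = \sum_k b_k + \sum_j c_j \in \Z[G]\otimes\Z[G]$ with $\phi(g)=\partial_2(b)$, as required.

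The one point that needs care — and the main (minor) obstacle — is producing the product-of-commutators expression for $g$ in the first place, and keeping the bookkeeping of the $\partial_2$-corrections consistent with the chosen generators. For us $g$ will always arise as an element known a priori to lie in $[G,G]$ (for instance, the image under a map that kills the abelianization); to write it as a product of commutators one first solves the word problem to express $g$ in the generators $u_1,\dots,u_b$, then uses that $g\in[G,G]$ means its image in $G_{\mathrm{ab}}$ is trivial to rewrite this word, via the standard collection process, as a product of commutators of generators. Both steps are effective given the algorithmic presentation of $G$. Once that expression is in hand, the rest is the purely mechanical substitution described above, so the algorithm terminates and outputs the desired $b$. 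I expect the write-up to be short: the content is entirely in identities (1) and (2), and the lemma is really just the statement that applying them is algorithmic.
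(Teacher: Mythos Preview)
Your proof is correct and follows essentially the same recursive approach as the paper: peel off one commutator at a time via property~(1) and handle each commutator via property~(2). Two cosmetic remarks: you reuse the symbol $b_k$ for both a group element in $[a_k,b_k]$ and the chain witnessing $\phi([a_k,b_k])=\partial_2(\cdot)$, which should be renamed; and your final paragraph is superfluous, since the lemma explicitly assumes the product-of-commutators expression is already given.
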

\begin{proof}
 The algorithm is recursive: write $g=[a,b]\cdot g'$ with $g'$  a product of commutators. If $g'=1$ then we are done 
by property $(2)$ above. If $g'\neq 1$ then by the properties above we have that
 \begin{align*}
  \phi([a,b]\cdot g')=&\phi([a,b])+\phi(g')-\partial_2([a,b]\otimes g')\\
   =& \phi(g')-\partial_2([a,b]\otimes g')\\ +& \partial_2\left( a\otimes a^{-1} +b\otimes b^{-1} -a\otimes b 
a^{-1} b^{-1} -b\otimes a^{-1}b^{-1} 
-a^{-1}\otimes b^{-1}+2 \cdot 1_G\otimes 1_G \right).
 \end{align*}
\end{proof}

Recall the element $\omega_\pi\in R_0^\fD(\fp\fn)^\times$ introduced in \ref{subsec: cohomology groups and hecke operators} when talking about the Atkin--Lehner involutions: it normalizes $\Gamma_0^\fD(\fp\fn)$ and its reduced norm generates $\fp$ (and it is totally positive if $K$ is totally real). We also introduced the notation $\widehat\Gamma_0^\fD(\fn)=\omega_\pi\Gamma_0^\fD(\fn)\omega_\pi^{-1}$. The group $\Gamma_0^\fD(\fp\fn)$ is contained in both $\Gamma_0^\fD(\fn)$ and $\widehat\Gamma_0^\fD(\fn)$, and it is well known that $\Gamma = \Gamma_0^\fD(\fn)\star_{\Gamma_0^\fD(\fp\fn)}\widehat\Gamma_0^\fD(\fn)$, where $\star$ stands for the amalgamated product.

The inclusions $\Gamma_0^\fD(\fp\fn)_\text{ab}\subset\Gamma_0^\fD(\fn)_\text{ab}$ and  $\Gamma_0^\fD(\fp\fn)_\text{ab}\subset\widehat\Gamma_0^\fD(\fn)_\text{ab}$ correspond to the natural homomorphisms
\begin{align*}
 \alpha\colon H_1(\Gamma_0^\fD(\fp\fn),\Z)\lra H_1(\Gamma_0^\fD(\fn),\Z), \ \ \hat\alpha\colon H_1(\Gamma_0^\fD(\fp\fn),\Z)\lra 
H_1(\widehat\Gamma_0^\fD(\fn),\Z).
\end{align*}
The element $\gamma_f\in H_1(\Gamma_0^\fD(\fp\fn),\Z)$ is new at $\fp$. This is equivalent to say that, after extending coefficients to $\Q$, it lies in $\ker(\alpha)\cap \ker(\hat\alpha)$. Therefore, the class of $\gamma_f$ is torsion when viewed as an element in both $\Gamma_0^\fD(\fn)_\text{ab}$ and $\widehat\Gamma_0^\fD(\fn)_\text{ab}$. In particular, there exists $e\in\Z_{>0}$ such that the class of  $\gamma_f^e$ is trivial in $\Gamma_0^\fD(\fn)_\text{ab}$ and $\widehat\Gamma_0^\fD(\fn)_\text{ab}$. Using the algorithms for the word problem of \cite{voight} and \cite{page}, we can find explicit expressions of the form
\begin{align*}
 \gamma_f^e=&\prod [a_i,b_i], \ \ \text{with } a_i,b_i\in\Gamma_0^\fD(\fn);\\
 \gamma_f^e=& \prod [c_j,d_j],\ \ \text{with } c_j,d_j\in\widehat\Gamma_0^\fD(\fn).
\end{align*}
In fact, for computing the second decomposition we can decompose $\omega_\pi\gamma_f^e\omega_\pi$ as a product of commutators  in $\Gamma_0^\fD(\fn)$ and obtain a decomposition in $\widehat\Gamma_0^\fD(\fn)$ by conjugating the found commutators. 

Now, by Lemma \ref{lemma: partial} we can explicitly find elements $z\in \Z[\Gamma_0^\fD(\fn)]\otimes\Z[\Gamma_0^\fD(\fn)]$ and $\hat z \in 
\Z[\widehat\Gamma_0^\fD(\fn)]\otimes\Z[\widehat\Gamma_0^\fD(\fn)]$ such that $\partial_2 z=\gamma_f^e$ and $\partial_2(\hat z)=\gamma_f^e$. Both elements $z$ and $\hat z$ can be viewed naturally as elements in $\Z[\Gamma]\otimes\Z[\Gamma]$, via the inclusions $\Gamma_0^\fD(\fn)\subset\Gamma$ and $\widehat\Gamma_0^\fD(\fn)\subset \Gamma$. Then the element $-z+\hat z\in \Z[\Gamma]\otimes\Z[\Gamma]$ clearly satisfies that $\partial_2 (-z+\hat z)=-\gamma_f^e+\gamma_f^e = 0$, so that it is indeed a $2$-cycle. Its class $c_f$ in $H_2(\Gamma,\Z)$ is the element we were looking for.

By definition $\Delta_f = \delta(c_f)$, so the next step is to compute the image of $c_f$ under the connecting homomorphism $\delta\colon H_2(\Gamma,\Z)\ra H_1(\Gamma,\Div^0\cH_\fp)$ of \eqref{eq: connecting homomorphism in homology}. The following lemma gives an explicit formula in terms of cycles.
\begin{lemma}\label{lemma: delta}
 Let $\tau$ be any element in $\cH_\fp$. The connecting homomorphism $\delta$ is the one induced at the level of chains by the map
\begin{align*}
\begin{array}{ccc}
\Z[\Gamma]\otimes\Z[\Gamma] & \lra & \Z[\Gamma]\otimes\Div^0\cH_\fp \\
g\otimes h & \mapsto & h\otimes (g^{-1}\tau-\tau).
\end{array}
\end{align*}
\end{lemma}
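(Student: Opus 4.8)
The plan is to unwind the definition of the connecting homomorphism $\delta$ attached to the short exact sequence of $\Gamma$-modules
\begin{align*}
0\lra \Z \lra \Div\cH_\fp \stackrel{\deg}{\lra} \Div^0\cH_\fp\lra 0,
\end{align*}
at the level of the bar resolution, and to check that the explicit cochain-level formula in the statement is a valid representative. First I would recall the mechanism: to compute $\delta$ on a class represented by a $2$-cycle $c=\sum_i g_i\otimes h_i\otimes 1\in \Z[\Gamma]\otimes\Z[\Gamma]\otimes\Z$, one lifts each copy of the trivial coefficient $\Z$ along $\deg\colon \Div\cH_\fp\to \Z$ by choosing, once and for all, a divisor of degree $1$ — here the point $[\tau]\in\Div\cH_\fp$, which has $\deg[\tau]=1$ — obtaining the $2$-chain $\tilde c=\sum_i g_i\otimes h_i\otimes[\tau]\in \Z[\Gamma]\otimes\Z[\Gamma]\otimes\Div\cH_\fp$. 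One then applies the degree-$2$ boundary $\partial_2$ (recorded in the excerpt: $\partial_2(g_1\otimes g_2\otimes a)=g_2\otimes g_1^{-1}a - g_1g_2\otimes a + g_1\otimes a$) to $\tilde c$; because $c$ is a cycle with trivial coefficients, the resulting $1$-chain lands in $\Z[\Gamma]\otimes\Div\cH_\fp$ with all coefficient-entries of degree $0$, i.e. in $\Z[\Gamma]\otimes\Div^0\cH_\fp$, and its class is by definition $\delta([c])$.

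The key computation is therefore to evaluate $\partial_2(g\otimes h\otimes[\tau])$ for a single basic chain and sort the three resulting terms. We get
\begin{align*}
\partial_2(g\otimes h\otimes[\tau]) = h\otimes g^{-1}[\tau]\; -\; gh\otimes[\tau]\; +\; g\otimes[\tau].
\end{align*}
Now I would observe that the last two terms, $-\,gh\otimes[\tau]+g\otimes[\tau]$, together form the image under $\partial_2$ of nothing in particular, but more to the point: modulo the subspace of $1$-boundaries and after projecting the coefficients to $\Div^0\cH_\fp$, one can replace $[\tau]$ (degree $1$) by $0$ in $\Div^0$, killing those two terms; what genuinely survives with degree-$0$ coefficient is the combination $h\otimes(g^{-1}[\tau]-[\tau])$ once one accounts for the fact that the honest degree-$0$ lift of the boundary is obtained by subtracting the degree from each coefficient. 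Concretely, since $\sum$ of the coefficients over a $2$-cycle vanishes, the well-defined $\Div^0$-valued $1$-chain attached to $g\otimes h$ is $h\otimes(g^{-1}\tau-\tau)$, which is exactly the asserted formula. I would then note that this assignment is visibly $\Z$-linear, hence extends to all of $\Z[\Gamma]\otimes\Z[\Gamma]$, and that it is independent of the choice of $\tau$: two choices $\tau,\tau'$ differ by $[\tau]-[\tau']\in\Div^0\cH_\fp$, and the induced difference on cycles is a boundary (it is $\partial_2$ applied to the corresponding lift), so the homology class $\delta(c_f)=\Delta_f$ does not depend on $\tau$; this is the content behind the phrase ``$\tau$ any element of $\cH_\fp$.''

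The main obstacle, such as it is, is purely bookkeeping: one must be careful about the distinction between the lift $\tilde c$ (with $\Div\cH_\fp$-coefficients) and its projection (with $\Div^0$-coefficients), and about which of the three boundary terms actually carry nontrivial degree, so that the ``$-gh\otimes[\tau]+g\otimes[\tau]$'' part is correctly seen to be the part that gets absorbed when passing to $\Div^0$. There is no serious difficulty — no convergence, no geometry of $\cH_\fp$ enters, only the standard snake-lemma description of the connecting map in group homology — so the proof is a short explicit verification; I would present it essentially as the display above followed by the remark on independence of $\tau$ and $\Z$-linearity.
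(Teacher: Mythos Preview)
Your overall approach --- lift the $2$-cycle along $\deg$ by choosing $\tau$, apply $\partial_2$, and simplify --- is exactly the paper's. But the simplification step, which is the only real content, is not carried out correctly. After you compute
\[
\partial_2(\tilde c)=\sum_i n_i\,h_i\otimes g_i^{-1}\tau \;-\;\sum_i n_i\,g_ih_i\otimes\tau\;+\;\sum_i n_i\,g_i\otimes\tau,
\]
you try to dispose of the last two sums by ``replacing $[\tau]$ by $0$ in $\Div^0$'' or by ``subtracting the degree from each coefficient.'' Neither of these is a valid operation here: all three sums have coefficients of degree $1$, so there is no intrinsic way to kill two of them and keep one by projecting coefficients. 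Likewise, $-gh\otimes\tau+g\otimes\tau$ is not a boundary for a single term $g\otimes h$.

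The missing step is to use the cycle condition on $c$ \emph{as an identity in $\Z[\Gamma]$}: since $\partial_2\bigl(\sum_i n_i\,g_i\otimes h_i\bigr)=0$, one has $\sum_i n_i\,h_i-\sum_i n_i\,g_ih_i+\sum_i n_i\,g_i=0$ in $\Z[\Gamma]$, i.e.
\[
\sum_i n_i\,g_ih_i=\sum_i n_i\,g_i+\sum_i n_i\,h_i.
\]
Tensoring with $\tau$ and substituting into $\partial_2(\tilde c)$ gives exactly
\[
\partial_2(\tilde c)=\sum_i n_i\,h_i\otimes(g_i^{-1}\tau-\tau),
\]
which is the claimed formula (and visibly has $\Div^0$-coefficients). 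Once you insert this one line, your proof is complete and identical to the paper's; the remark on independence of $\tau$ is fine but not needed for the statement as given.
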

\begin{proof}
 Let $C=\sum n_i g_i\otimes h_i\in \Z[\Gamma]\otimes\Z[\Gamma]$. By the definition of the connecting homomorphism $\delta$ we have 
that 
\begin{align}
 \delta (C)= & \partial_2\left( \sum n_i g_i\otimes h_i\otimes \tau \right)\\
= &\sum n_i h_i\otimes g_i^{-1}\tau-\sum n_i g_i h_i \otimes \tau +\sum n_ig_i\otimes \tau.\label{eq: delta(c)}
 \end{align}
But since $c$ is a cycle we have that $\partial_2\left( \sum n_i g_i \otimes h_i \right)=0$, and therefore
\begin{align*}
 \sum n_i g_i h_i = \sum n_i g_i +\sum n_i h_i.
\end{align*}
From this we have that
\begin{align*}
 \sum n_i g_i h_i \otimes \tau = \sum n_i g_i\otimes\tau +\sum n_i h_i\otimes\tau
\end{align*}
and plugging this into \eqref{eq: delta(c)} we obtain that
\begin{align*}
 \delta(C)=\sum n_i h_i\otimes (g_i^{-1}\tau- \tau).
\end{align*}
\end{proof}

\subsection{The cohomology class} Unlike the homology class of the previous section, the cohomology class $\omega_f$ is exactly the same as that arises in the computation of Darmon points. Explicit algorithms for its calculation were given in \cite{shpquat} in the case where the base field is $K=\Q$, and they can be adapted without much difficulty to general $K$. We next describe the main steps of these algorithms, and the reader is referred to \cite{shpquat} for more details.

The element $\gamma_f\in \Gamma_0^\fD(\fp\fn)$ computed in \ref{subsec: the homology class} gives rise to a cohomology class $\varphi_f\in H^1(\Gamma_0^\fD(\fp\fn),\Z)$. Since the $1$-coboundaries are trivial in this case, there is no necessity of distinguishing between a cocycle and its cohomology class. That is, $\varphi_f$ is just a homomorphism $\Gamma_0^\fD(\fp\fn)_\text{ab}\ra \Z$. We have seen that $\Gamma_0^\fD(\fp\fn)_\text{ab}$ decomposes as a direct sum of irreducible spaces for the action of the Hecke algebra, and that one of the rank $1$ subspaces is generated by $\gamma_f$. Then $\varphi_f$ is the map that sends $\gamma_f$ to $1$ and the elements in the other subspaces to $0$. 

Recall that Shapiro's lemma and \eqref{eq: induced modules} give rise to 
\begin{align}\label{eq: shapiro yet again}
  H^1(\Gamma_0^\fD(\fp\fn),\Z)\simeq H^1(\Gamma,\Ind_{\Gamma_0^\fD(\fp\fn)}^\Gamma\Z)\simeq H^1(\Gamma,\cF(\cE_0,\Z)).
\end{align}
When constructing $\omega_f$ we saw that the image of $\varphi_f$ on the group of the right lies in the image of the natural map \begin{align*}H^1(\Gamma,\text{HC}(\Z))\stackrel{\rho}{\ra}H^1(\Gamma,\cF(\cE_0,\Z)),\end{align*} and a preimage is, by definition, $\omega_f$. The isomorphisms \eqref{eq: shapiro yet again} are induced by maps on cocycles which are completely explicit, so one can effectively compute a cocycle in $Z^1(\Gamma,\cF(\cE_0,\Z))$ whose class corresponds to the image of $\varphi_f$. However, this cocycle \emph{will not} in general take values in the submodule $\text{HC}(\Z)$ of $\cF_0(\cE,\Z)$ (what it is true is that it will be \emph{cohomologous} to a cocycle with values in harmonic cocycles). 

The problem is that the map on cocycles that induces Shapiro's isomorphism depends on the choice of a system of representatives for $\Gamma_0^\fD(\fp\fn)\backslash\Gamma$. Different choices lead to different cocycles in $Z^1(\Gamma,\cF(\cE_0,\Z))$. Of course, all of them are cohomologous inside $Z^1(\Gamma,\cF(\cE_0,\Z))$, but only some of them actually lie in $Z^1(\Gamma,\text{HC}(\Z))$. Following an idea introduced in \cite[\S 4]{LRV}, it is possible to choose a system of representatives in such a way that the obtained cocycle directly takes values in $\text{HC}(\Z)$. They are called \emph{radial systems}, and we next recall their definition.

Let us denote by $\Z_\fp$ the completion of $\cO_K$ at $\fp$ and by $\F_\fp$ its residue field. The first step is to compute a system of representatives $\Upsilon=\{\gamma_a\}_{a\in \P^1(\F_\fp)}$ for $\Gamma_0^\fD(\fp\fn)\backslash \Gamma_0^\fD(\fn)$ satisfying that:
\begin{align*}
\gamma_\infty = 1,\ \text{ and } \ \iota_\fp(\gamma_a) = u_a \smtx{0}{-1}{1}{\tilde a}\ \text{ (here  $\tilde a $  is  a lift of $a$ to $\Z_\fp$)}, 
\end{align*}
where $u_a$ belongs to
\begin{align*}
  \Gamma_0^{\fD,\text{loc}}(\fp) = \{ \smtx a b c d \in \SL_2(\Z_\fp) \mid c\in\fp \}.
\end{align*}
This induces a system of representatives $\{\tilde \gamma_a\}_{a\in\P^1(\F_\fp)}$ of $\Gamma_0^\fD(\fp\fn)\backslash \widehat \Gamma_0^\fD(\fp\fn)$ by putting $\tilde\gamma_\infty= 1$ and $\tilde\gamma_a = \pi^{-1}\omega_\pi\gamma_a\omega_\pi$ for $a\neq \infty$.

We will index the representatives of $\Gamma_0^\fD(\fp\fn)\backslash \Gamma$ by edges in $\cE_0$ (recall that these are in bijection, cf. \eqref{eq: correspondences}), and the representatives of $\Gamma_0^\fD(\fn)\backslash \Gamma $ by vertices in $ \cV_0$. We define $\{\gamma_e\}_{e\in\cE_0}$ and $\{\gamma_v\}_{v\in\cV}$ to be the systems of representatives uniquely determined by the conditions:
\begin{itemize}
\item $\gamma_{v_0}=\gamma_{v_1}=1$;
 \item $\{\gamma_e\}_{s(e)=v}=\{\gamma_a\gamma_v\}_{a\in \P^1(\F_\fp)}$ for all $v\in\cV_0$;
\item  $\{\gamma_e\}_{t(e)=v}=\{\tilde\gamma_a\gamma_v\}_{a\in \P^1(\F_\fp)}$ for all $v\in\cV_1$;
\item  $\gamma_{s(e)} = \gamma_e$ for all $e\in\cE_0$ such that $d(t(e),v_0)< d(s(e),v_0)$;
\item  $\gamma_{t(e)} = \gamma_e$ for all $e\in\cE_0$ such that $d(t(e),v_0)>d(s(e),v_0)$.
\end{itemize}
We next describe a cocycle $\mu_f$ which represents the image of $\varphi_f$ under \eqref{eq: shapiro yet again}. In order to lighten the notation we set $\mu=\mu_f$, since $f$ is fixed in this discussion. For $e\in\cE_0$ and $g\in \Gamma$, let $h(g,e)\in \Gamma_0^\fD(\fp\fn)$ be the element determined by the identity
\begin{align}\label{eq: def of h}
\gamma_eg=h(g,e)\gamma_{g^{-1}e}.
\end{align}
 Now for $g\in \Gamma$, let $\mu_{g}\colon \cF(\cE_0,\Z)\ra $ be the map defined by
\begin{align}\label{eq: def of mu} 
 \mu_{g}(e)=\varphi_f({h(g,e)}), \ \text{for $e\in \cE_0$}.
\end{align}
Since the system of representatives of $\Gamma_0^\fD(\fp\fn)\backslash \Gamma$ was taken to be radial,  $\mu_g$ belongs in fact to $\text{HC}(\Z)$ (cf. \cite[Proposition 4.8]{LRV}). In addition, $\mu$ is a $1$-cocycle, i.e.,  $\mu\in Z^1(\Gamma,\text{HC}(\Z))$. 

The cocycle $\mu$ is not yet a cocycle representing the cohomology class $\omega_f$, but almost. The last step is to ``project to the cuspidal part''. For this, let $\fl$ be a prime not dividing $\fp\fn\fD$ and consider the projector $T_\fl- |\fl| -1$. The cocycle $(T_\fl-|\fl|-1)\mu$ turns to be the correct one, i.e., it represents (a multiple of) $\omega_f$. Since considering a multiple of $\omega_f$ does not change the homothety class of the lattice $\Lambda_f$, we can assume that $\omega_f$ is given by $(T_\fl-|\fl|-1)\mu$.

In view of the above discussion, the calculation of $\omega_f$ in practice boils down to the effective computation of the elements $h(g,e)$ of \eqref{eq: def of h}. This can be done with the algorithm of \cite[Theorem 4.1]{shpquat}. 

\subsection{The multiplicative pairing} In order to simplify a little bit the computations, it is convenient to use the Hecke equivariance of the integration pairing and write
\begin{align*}
q_f=  \Xint\times\langle \omega_f,\Delta_f \rangle = \Xint\times\langle (T_\fl-|\fl|-1)\mu,\Delta_f \rangle = \Xint\times\langle \mu,(T_\fl-|\fl|-1)\Delta_f \rangle, 
\end{align*}
where $\mu$ is the explicit cocycle defined in \eqref{eq: def of mu}. The reason is that the Hecke action is slightly easier to compute on $H_1(\Gamma_0^\fD(\fp\fn),\Div^0\cH_\fp)$ than on $H^1(\Gamma_0^\fD(\fp\fn),\text{HC}(\Z))$, simply because the coefficients are easier to manipulate. Indeed, we use the explicit formula \eqref{eq: T_l} to compute $T_\fl\Delta_f$. 

Now $(T_\fl-|\fl|-1)\Delta_f$ is of the form
\begin{align*}
  (T_\fl-|\fl|-1)\Delta_f = \sum g_i\otimes (\tau_i'-\tau_i), \ \text{ for certain } g_i\in \Gamma \text{ and } \tau_i',\tau_i\in\cH_\fp,
\end{align*}
so that 
\begin{align*}
  q_f = \prod \Xint\times_{\P^1(K_\fp)} \left( \frac{t-\tau_i'}{t-\tau_i}\right)d\mu_g(t).
\end{align*}
Therefore, computing $q_f$ boils down to compute multiplicative integrals of the form
\begin{align}\label{eq: integrals}
  \Xint\times_{\P^1(K_\fp)} \left(\frac{t-\tau_2}{t-\tau_1} \right)d\mu_g(t).
\end{align}
These integrals can in principle be computed, up to finite precision, by Riemann products. Namely, by taking a finite covering $\cU$ of $\P^1(K_\fp)$ and evaluating the expression appearing in \eqref{eq: mult int}. However, this method is of exponential complexity in terms of the number of $p$-adic digits of accuracy, and it is too inefficient for practical purposes. 

Integrals \eqref{eq: integrals} can be computed instead by using the method of overconvergent cohomology of \cite{pollack-pollack}, a generalization of Steven's overconvergent modular symbols (cf. \cite{PS}) which is of polynomial complexity and much more efficient in practice. This method is explained in \cite[\S 5]{shpquat} for the case where $K=\Q$. However, the assumption that $K=\Q$ is by no means essential, and all the calculations and algorithms of loc. cit. go through with no essential difficulty to any $K$.

\section{Numerical computations}\label{sec: numerical computations and tables}
We have implemented in Sage \cite{sage} the algorithms described in Section \ref{sec: explicit methods and algorithms} that compute approximations to $q_f$. Some of the code uses routines that are currently only available in Magma. The implementation is done under the (inessential) additional restriction that the prime $\fp$ is of residual degree $1$. This simplifies the routines involving calculations in the local field, since in that case $K_\fp$ is $\Q_p$  (here $p$ is the norm of $\fp$), rather than an extension of $\Q_p$. The code and the instructions for using it are available at \url{https://github.com/mmasdeu/darmonpoints}.

\subsection{Recovering the curve from the $\mathcal{L}$-invariant}
Recall that the rational Hecke eigenclass $f$ of level $\fp\fn$ on a quaternion algebra of discriminant $\fD$ should correspond to an elliptic curve $E_f$ of conductor $\fN= \fp\fD\fn$. According to Conjecture \ref{conj: our main}, the lattice generated by $q_f$ is commensurable with the Tate lattice of a curve satisfying the defining properties of $E_f$ (i.e., a curve of conductor $\fN$ and such that $\# E_f(\cO_K/\fl)= |\fl| + 1 -a_\fl(f)$ for all primes $\fl$ of $K$). In order to test this conjecture we use the calculated $q_f$ to ``discover'' an equation for $E_f$.

Roughly speaking, the idea is that conjecturally the Tate parameter of $E_f$ is of the form $q_f^r$ for  some $r\in\Q$, and from the Tate parameter one can compute the $j$-invariant by a well-known power series. Therefore, the problem reduces to that of computing the equation of a curve over $K$, given a $p$-adic approximation to its $j$-invariant. For this we use some of the methods of \cite{cremona-lingham}. 

More precisely, we look for a Weierstrass model of the form
\begin{align*}
 y^2 = x^3 - \frac{c_4}{48} x - \frac{c_6}{864},
\end{align*}
for which we know (an approximation of) the $j$-invariant. The main idea is to use the relation  $ j = {c_4^3}/{\Delta}$, where $\Delta$ denotes the discriminant of the above model. Of course we do not know $\Delta$ a priori, but we have certain control on it: by \cite[Proposition 3.2]{cremona-lingham}, its class in $K^\times/(K^\times)^{12}$ belongs to 
\begin{align*}
  K(\fN,12) = \{x\in K^\times/(K^\times)^{12} \mid v_\fq(\fN)\equiv 0 \pmod{12} \text{ for all primes } \fq \mid \fN\},
\end{align*}
which is a finite set. What we do is to run over $\Delta$'s in $K(\fN,12)$; for each try of $\Delta$, we can assume that the valuation of $\Delta$ equals the valuation of $q_E$, and from this we get the $r$ for which $q_f^r$ is a candidate for $q_E$. We then compute $j$ from the candidate to $q_E$ and try to recognize $\sqrt[3]{j\Delta}$ as an element of $K$. If we succeed, this is the $c_4$ and the $c_6$ can then be computed by means of $\Delta = (c_4^3-c_6^2)/1728$.

Summing up, the algorithm that we use is the following. The input is the element $q_f\in \C_p^\times$, which we have computed up to, say, $N$ digits of $p$-adic accuracy (we can assume that $v_p(q_f)>0$, for we can replace $q_f$ by $q_f^{-1}$). In all the examples we have tried, $q_f$ turns out to lie in $\Q_p^\times$. This is of course consistent with Conjecture \ref{conj: our main}, because the Tate period of $E_f$ lies in $K_\fp^\times=\Q_p^\times$.

\begin{enumerate}
\item Set $d:=v_\fp(q_f)$, and compute the finite number of elements $q_0$ such that $q_0^d=q_f$ (in particular, $v_\fp(q_0)=1$). 
\item For every $q_0$ as above, run over the finite number of $\Delta\in K(\fN,12)$ and set $q=q_0^{v_p(\Delta)}$. (This is the candidate for $q_E$ at this step.) 
\item For each $q$ as above, compute $j = j(q)$ by means of the power series $j(q) = 1/q + 744+ 196884q+\cdots$. This gives an element $j\in\Q_p^\times$, known up to precision $p^N$. Then compute $c_4'=\sqrt[3]{j\Delta}\in \Q_p^\times$.
\item Using standard recognition techniques, try to find $c_4\in K$ which coincides with $c_4'$ up to precision $p^N$. If such a $c_4$ is found, test whether $c_4^3-1728\Delta$ is a square in $K$ and, if so, set $c_6$ as one of its square roots.
\item If in the previous step we have found $c_4,c_6\in K$, compute the conductor of the curve $y^2 = x^3-\frac{c_4}{48}x-\frac{c_6}{864}$. If the conductor is equal to $\fN$, then return this curve. Note that if we reach this step, then there are six possilibities to try, for $c_4$ can be modified by third roots of unity, and $c_6$ by a sign.
\end{enumerate}
Two remarks are in order here:
  \begin{enumerate}[a)]
\item Observe that the precision to which we need to know $q_f$ is determined by the height of the $c_4$ in a Weierstrass model of $E_f$. Indeed, if the precision of $q_f$ is too small one is in general not able to recognize $c_4$ from its $p$-adic approximation $c_4'$.
  \item If the above algorithm returns the equation of a curve, Conjecture \ref{conj: our main} would imply that it is an equation of $E_f$. In that sense, one might think that the algorithm is only conjectural. However, if it returns a curve one can always check \emph{a posteriori} whether such a curve satisfies the defining properties of $E_f$, by checking that its $a_\fp$'s coincide with the eigenvalues of $f$ by $T_\fp$.
  \end{enumerate}

\subsection{Numerical results} We have performed systematic calculations for totally real fields of degree $2$ and $3$, and for ATR fields of degree $2$, $3$, and $4$. For each of these degrees, we have considered the number fields of narrow class number $1$ and discriminant in absolute value up to $5000$ (this data was obtained from LMFDB \cite{lmfdb}). For each such number field $K$ we have exhausted all levels $\fN$ up to a norm $200$ which satisfy certain additional restrictions. First of all, recall that the method presented in this note can only be applied to those $\fN$ satisfying that:
\begin{itemize}
\item $\fN$ can be factored into pairwise coprime ideals $\fp\fD\fn$, where $\fp$ is prime and $\fD$ is the discriminant of a quaternion algebra $B/K$ which is split at one archimedean place.
\end{itemize}
In addition, we have imposed additional restrictions in order to simplify the coding of some routines and speed up the computations. Namely:
\begin{itemize}
\item The norm of $\fp$ is at most $23$ (primes of large norm slow down our implementation of the integration routines);
\item The norm of $\fp$ is a prime number (so that $K_\fp\simeq\Q_p$ rather than a finite extension, which simplifies the $p$-adic routines).
\end{itemize}
For every field $K$ and every factorization of $\fN=\fp\fD\fn$ satisfying the conditions above, we have computed $H_1(\Gamma_0^\fD(\fp\fn),\Q)$ (for a choice of the quaternion algebra $B$ of discriminant $\fD$ and that splits at one archimedean place). For most of the levels this homology group does not contain any rational Hecke eigenline, and thus one does not expect an elliptic curve of that conductor. For the levels in which there are rational lines, we have computed the $\mathfrak{L}$-invariant of each line, and tried to recognize an algebraic curve over $K$ whose $\mathfrak{L}$-invariant matches up to high $p$-adic precision and whose conductor is $\fN$.

In the appendices we provide tables for the results of these computations. Each row contains the number field $K$, the level $\fN$ factored as $\fp\fD\fn$ and the coefficients $c_4$ and $c_6$ for the found curve of conductor $\fN$. These $c_4$ and $c_6$ are not necessarily minimal, in the sense that there might be curves of smaller height in the same isogeny class.

We warn the reader that these tables are not complete in the sense that for each $K$ not necessarily all the levels $\fN$ of norm $\leq 200$ and satisfying the above restrictions appear. The first reason is that $H_1(\Gamma_0^\fD(\fN),\Q)$ might not contain any rational line and no curve is expected at that level. Such levels can also be of some interest and they can be found in a more complete version of the tables at \url{https://github.com/mmasdeu/elliptic_curve_tables}. Another reason, this one related to our implementation, is that we imposed a limitation of time and computations taking too long were stopped\footnote{We limited to $30$ minutes the time allowed to compute the arithmetic group $\Gamma_0^\fD(\fp\fn)$, and to $120$ minutes the time to do the rest of the calculation (homology class, cohomology class, and integration pairing)}. Also, in some occasions, the $\fp$-adic lattice has been successfully computed, but we have not been able to recognize an algebraic curve of the right conductor from the Tate period $q_f$. This usually happens when the precision to which we have computed $q_f$ (which is roughly $100$ decimal digits, in our case) is not enough because the curve has too large height. Finally, runtime errors have occasionally arisen.

We remark that for each $\fN$ there might be several choices for the prime $\fp$, as well as several choices for the factorization of $\fN$ as $\fp\fD\fn$. In particular, in the tables it is sometimes the case that the same (isogeny class of) elliptic curve is found from different factorizations of $\fN$.

\section{Discussion and further improvements}
The extensive numerical calculations that we have carried out provide some evidence of the validity of Conjecture \ref{conj: our main}. They also illustrate how the construction of the $\fp$-adic lattice can be translated into explicit algorithms which are well suited for systematic computations. 

Along the text we imposed a number of conditions to the fields and levels that we consider. Some of these conditions are inherent to the method;  the main one is the necessity of having a prime $\fp\mid\mid \fN$ and a factorization $\fN=\fp\fD\fn$ with $\fD$ the discriminant of a quaternion algebra over $K$ that splits at one archimedean place. Most of the other extra restrictions we imposed were just simplifying assumptions. Therefore, it might be interesting to relax them, as that would enlarge the types of fields and levels for which one is able to compute curves. Some of the possible improvements, both to the given algorithms and to our current implementation of them, might be:
\begin{itemize}
\item Do the local computations over finite extensions of $\Q_p$; this would allow to treat $\fp$'s of residual degree $>1$. 
\item Improve the integration routines in order to allow $\fp$'s of higher norm.
\item One of the bottlenecks of our current implementation is the computation of $\Gamma_0^\fD(\fp\fn)\subset B$ using the routines of John Voight and Aurel Page. This is usually much more computationally demanding than computing $\Gamma_0^\fD(1)$, the norm one elements of  a maximal order. In this kind of situations, a usual trick is to replace groups of the form $H_i(\Gamma_0^\fD(\fp\fn),A)$ by $H_i(\Gamma_0^\fD(1),\Ind_{\Gamma_0^\fD(1)}^{\Gamma_0^\fD(\fp\fn)}A)$ via Shapiro's Lemma. 
Implementing this approach is likely to lead to an improvement of the overall running time.
\item Develop algorithms for working with (co)homology groups of degree higher than one. This would allow to treat fields $K$ having more than one complex place.
\item Provide a construction of the lattice $\Lambda_f$ when $K$ has narrow class number $>1$. This would probably involve working adelically.
\end{itemize}

\bibliographystyle{halpha}
\bibliography{refs}
\newpage

\appendix
\section{Tables}
We include tables of curves for number fields other than $\QQ$ of signatures $(r,s)$ with $s\leq 1$, and for which $r+s\leq 3$. That is, when $s=0$ we looked at totally real quadratic and cubic fields; when $s=1$ we looked at cubic ATR fields (of signature $(1,1)$) and quartic ATR fields (of signature $(2,1)$). Each row of the tables consists of five columns:
\begin{enumerate}
\item the absolute value $|\Delta_K|$ of the discriminant of the field $K$,
\item The coefficients $[b_0,\ldots,b_{n-1}]$ of a minimal polynomial $f_K(x)=x^n+b_{n-1}x^{n-1}+\cdots b_1x+b_0$ of $K$.
\item The norm $\operatorname{Nm}(\fN)$ of an ideal $\fN$ (the level).
\item A factorization $\fN=\fp\fD\fm$ of the the level. All ideals are principal, and we use the notation $(\alpha)_a$ to indicate the ideal generated by an element $\alpha\in\cO_K$ of norm $a$.
\item The coefficients $c_4(E)$ and $c_6(E)$ of the elliptic curve $E$ expressed in terms of $r$, a root of $f_K(x)$.
\end{enumerate}

\section*{Real quadratic fields}


}

\end{document}